\title{On a combinatorial problem of \\ Erd\H{o}s, Kleitman and Lemke}
\author{Benjamin Girard}
\thanks{\noindent \textit{Mathematics Subject Classification (2010):}
05E15, 11B75, 11A25, 20D60, 20K01.}
\thanks{B. GIRARD, 
IMJ, \'{E}quipe Combinatoire et Optimisation, Universit\'{e} Pierre et Marie Curie~(Paris $6$), $4$ place Jussieu, 75005 Paris, France
(\textit{e-mail}: bgirard@math.jussieu.fr)}
\address{
IMJ, \'{E}quipe Combinatoire et Optimisation, Universit\'{e} Pierre et Marie Curie~(Paris $6$), $4$ place Jussieu, 75005 Paris, France.}
\email{bgirard@math.jussieu.fr}
\theoremstyle{plain}
\newtheorem{theorem}{Theorem}[section]
\newtheorem{proposition}[theorem]{Proposition}
\newtheorem{lem}[theorem]{Lemma}
\newtheorem{corollary}[theorem]{Corollary}
\newtheorem{conjecture}{Conjecture}
\theoremstyle{definition}
\theoremstyle{remark}
\def\cnp[#1,#2]{\begin{pmatrix} #1 \\#2 \end{pmatrix}}
\begin{document}

\maketitle \setcounter{page}{1} \vspace{0.0cm}

\begin{abstract} 
In this paper, we study a combinatorial problem originating in the following conjecture of Erd\H{o}s and Lemke: given any sequence of $n$ divisors of $n$, repetitions being allowed, there exists a subsequence the elements of which are summing to $n$. 
This conjecture was proved by Kleitman and Lemke, who then extended the original question to a problem on a zero-sum invariant in the framework of finite Abelian groups.
Building among others on earlier works by Alon and Dubiner and by the author, our main theorem gives a new upper bound for this invariant in the general case, and provides its right order of magnitude.
\end{abstract}

\vspace{-0.7cm}
\section{Introduction}
\label{Section Introduction}
Let $G$ be a finite Abelian group, written additively. 
If $G$ is cyclic of order $n$, it will be denoted by $C_n$. 
In the general case, we can decompose $G$ as a direct product of cyclic groups $C_{n_1} \oplus \cdots \oplus C_{n_r}$ such that $1 < n_1 \mid \cdots \mid n_r \in \mathbb{N}$, where $r$ and $n_r$ are respectively called the rank and exponent of $G$.
Usually, the exponent of $G$ is simply denoted by $\exp(G)$.
The order of an element $g$ of $G$ will be written $\text{ord}(g)$ and for every divisor $d$ of $\exp(G)$,
we denote by $G_d$ the subgroup of $G$ consisting of all elements of order dividing $d$: 
$$G_d=\left\{x \in G \text{ } | \text{ } dx=0\right\}.$$ 

\medskip
In this paper, any finite sequence $S=(g_1,\dots,g_\ell)$ of $\ell$ elements of $G$  will be called a \em sequence \em over $G$ of \em length \em $|S|=\ell$. We will also denote by $\sigma(S)$ the sum of all elements contained in $S$, which will be referred to as a \em zero-sum sequence \em whenever $\sigma(S)=0$. 

\medskip
Given a sequence $S$ over $G$, we denote by $S_d$ the subsequence of $S$ consisting of all elements of order $d$ contained in $S$, and by $\mathsf{k}(S)$ the 
\em cross number \em of $S=(g_1,\dots,g_\ell)$, which is defined as follows:
$$\mathsf{k}(S)=\displaystyle\sum_{i=1}^{\ell} \frac{1}{\text{ord}(g_i)}.$$
 
\medskip
By $\mathsf{t}(G)$ we denote the smallest integer $t \in \mathbb{N}^*$ such that every sequence $S$ over $G$ of length $|S| \geq t$ contains a non-empty zero-sum subsequence $S' \mid S$ with $\mathsf{k}(S') \leq 1$. Such a subsequence will be called a \em tiny zero-sum subsequence\em.

\medskip
The investigations on $\mathsf{t}(G)$ originate in the following conjecture, addressed by Erd\H{o}s and Lemke in the late eighties (see \cite[Introduction]{KleitLem89}). Is it true that out of $n$ divisors of $n$, repetitions being allowed, one can always find a certain number of them that sum up to $n$? Motivated by this conjecture, Kleitman and Lemke \cite[Theorem 1]{KleitLem89} proved the following stronger result.

\begin{theorem}[Kleitman and Lemke \cite{KleitLem89}] 
\label{Kleitman et Lemke}
For any given integers $a_1,\dots,a_n$ there is a non-empty subset $I \subseteq \llbracket 1,n \rrbracket$ such that
$$n \mid \displaystyle\sum_{i \in I} a_i \hspace{0.2cm} \text{ and } \hspace{0.2cm} \displaystyle\sum_{i \in I} \gcd(a_i,n) \leq n.$$
\end{theorem}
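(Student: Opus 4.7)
My approach would be induction on $n$, the base case $n=1$ being trivial. For the inductive step, it is natural to view $a_1,\dots,a_n$ as residues modulo $n$, since the inequality $\sum_{i\in I}\gcd(a_i,n)\le n$ translates exactly to the cross-number bound $\mathsf{k}\le 1$ for the subsequence indexed by $I$; the goal thus becomes to exhibit a tiny zero-sum subsequence of $C_n$, i.e.\ to prove $\mathsf{t}(C_n)\le n$.

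The main idea I would pursue is graph-theoretic. Construct a directed graph $D$ on the vertex set $V=\mathbb{Z}/n\mathbb{Z}$ by choosing a bijection $i\mapsto v_i$ between indices and vertices and adding, for each $i$, a directed edge $e_i$ from $v_i$ to $v_i+a_i\pmod n$ with weight $w(e_i)=\gcd(a_i,n)$. Since every vertex then has out-degree one, $D$ necessarily contains a directed cycle $v_{i_1}\to\cdots\to v_{i_k}\to v_{i_1}$, and unfolding the relation $v_{i_{j+1}}=v_{i_j}+a_{i_j}$ along that cycle yields $\sum_{j=1}^{k}a_{i_j}\equiv 0\pmod n$; the subset $I=\{i_1,\dots,i_k\}$ is thus zero-sum regardless of the chosen bijection.

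The technical heart of the proof is arranging the bijection $i\mapsto v_i$ so that at least one cycle of $D$ has total weight at most $n$. A natural strategy is to process the indices greedily in decreasing order of $\gcd(a_i,n)$, placing the heaviest edges first so as to spread them along long paths, and then to extract a cycle of minimum weight from the resulting functional digraph. One would hope to control this minimum via a double-counting argument comparing the cycle's weight to the total weight $\sum_{i=1}^{n}\gcd(a_i,n)$, exploiting the fact that $D$ consists of exactly $n$ edges on $n$ vertices.

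The main obstacle lies with extremal configurations where many $a_i$ share the same gcd with $n$, e.g.\ $a_1=\cdots=a_n=d$ for some $d\mid n$: such sequences saturate the desired inequality and leave no slack whatsoever. I expect that handling these cases requires a reduction modulo a prime divisor $p$ of $n$: apply the induction hypothesis in $\mathbb{Z}/(n/p)\mathbb{Z}$ to find a tiny zero-sum subsequence there, contract it to a single element of the subgroup of $C_n$ consisting of elements of order dividing $p$, and then carefully verify that the cross-number bound is preserved when lifting back to $C_n$.
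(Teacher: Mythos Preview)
The paper does not contain a proof of this theorem. It is quoted as a result of Kleitman and Lemke \cite{KleitLem89}, with Chung's graph-pebbling reproof mentioned, and the paper's own machinery (Theorem~\ref{main result} specialized to $r=1$, $c_1=1$) only recovers $\mathsf{t}(C_n)\le n$ when $n$ is a prime power; for composite exponent with several prime factors it yields the weaker $\mathsf{t}(C_n)\le 2(n-1)+1$ via Theorem~\ref{t(G) est lineaire en n}. So there is nothing in the paper to compare your argument against.

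On its own merits, your sketch has a genuine gap. The functional-digraph trick guarantees a zero-sum subset for free, but the weight bound is the whole content of the theorem, and the step you label ``the technical heart'' is not carried out. The greedy placement is too vague, and small examples already show it is fragile: with $n=6$ and $(a_i)=(3,3,3,2,2,2)$, assigning the three heavy indices to vertices $0,1,2$ and the light ones to $3,4,5$ produces a unique cycle of weight $12$; to obtain a light cycle one must deliberately place two of the $a_i=3$ indices at $v$ and $v+3$, which no generic greedy rule does. Nor is there a double-counting argument of the kind you propose, since the total edge weight $\sum_i\gcd(a_i,n)$ can be of order $n^2$ while you need a single cycle of weight at most $n$.

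Your final paragraph is the right direction and is essentially how Kleitman--Lemke and Geroldinger actually argue: write $n=pm$, repeatedly use $\eta(C_p)=p$ to extract $m$ disjoint subsequences of length at most $p$ whose sums lie in the index-$p$ subgroup, apply the induction hypothesis to those sums, and check that cross number is sub-additive under this contraction. This is the rank-one instance of the mechanism behind Lemmas~\ref{lego}--\ref{legobis} here. If you develop that paragraph fully and drop the digraph heuristic, you will have a correct proof.
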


Meanwhile, Lagarias and Saks framed a graph-theoretic approach, called graph pebbling, thanks to which the problem of Erd\H{o}s and Lemke could be reduced to the study of a combinatorial game, played with pebbles on the vertices of a simple graph. In this context, Chung \cite[Theorem 6]{Chung89} found a new elegant proof of Theorem \ref{Kleitman et Lemke}, and, under one extra assumption made on the prime factors of $n$, Denley \cite[Theorem 1]{Denley} could sharpen this theorem. Graph pebbling led to a rapidly growing literature as well as many open problems. The interested reader is referred to the surveys \cite{Hurlbert99,Hurlbert05} which contain many references on the subject.

\medskip
In addition, let us underline that, with our notation, Theorem \ref{Kleitman et Lemke} simply asserts that $\mathsf{t}(C_n) \leq n,$ and this upper bound is easily seen to be optimal. 
In the general case, Kleitman and Lemke \cite[Section 3]{KleitLem89} conjectured that $\mathsf{t}(G) \leq |G|$ holds for every finite Abelian group $G$. 
This conjecture was proved, using tools from zero-sum theory, by Geroldinger \cite{Gero93}. An alternative proof was then found by Elledge and Hurlbert \cite[Theorem 2]{Elledge05} using graph pebbling. 

\medskip
In this paper, we prove that in the general case of finite Abelian groups, the currently known upper bound on $\mathsf{t}(G)$ can be improved significantly. 
More precisely, our main theorem shows that, for finite Abelian groups of fixed rank, $\mathsf{t}(G)$ grows linearly in the exponent of $G$, which gives the correct order of magnitude. 
To do so, we prove that $\mathsf{t}(G)$ can be bounded above using a result of Alon and Dubiner \cite{Alodub95} on the following classical invariant in zero-sum theory.

\medskip
By $\eta(G)$ we denote the smallest integer $t \in \mathbb{N}^*$ such that every sequence $S$ over $G$ of length $|S| \geq t$ contains a non-empty zero-sum subsequence $S' \mid S$ with $|S'| \leq \exp(G)$. Such a subsequence is called a \em short zero-sum subsequence\em.

\medskip
Since $\mathsf{k}(T) \leq 1$ implies $|T| \leq \exp(G)$, one can notice that $\eta(G) \leq \mathsf{t}(G)$ always holds. 
Let us also mention that $\eta(G)$ is one out of many invariants studied because of their arithmetical applications. 
In this respect, the interested reader is referred to \cite{GeroKoch05,GaoGero06,GeroRuzsa09} for comprehensive surveys on non-unique factorization theory. 
For instance, it is known \cite[Theorem 5.8.3]{GeroKoch05} that for all integers $m,n \in \mathbb{N}^*$ such that $m \mid n$, one has
\begin{equation}
\label{eta rank two} 
\eta(C_m \oplus C_n)=2m+n-2.
\end{equation}
In particular, the equality $\mathsf{t}(C_n)=\eta(C_n)=n$ holds. 
Moreover, and even if little is known on the exact value of $\eta(G)$ for finite Abelian groups of rank $r \geq 3$, its behavior is better understood than it is for $\mathsf{t}(G)$, and the following key result, due to Alon and Dubiner \cite[Theorem 1.1]{Alodub95}, will be extensively used throughout this article.

\begin{theorem}[Alon and Dubiner \cite{Alodub95}]  
\label{Alon c_r} 
For every $r \in \mathbb{N}^*$ there exists a constant $c_r > 0$ such that for every $n \in \mathbb{N}^*$, one has 
$$\eta\left(C^r_n\right) \leq c_r(n-1)+1.$$
\end{theorem}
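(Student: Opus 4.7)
The plan is to prove Theorem \ref{Alon c_r} in two stages: first establish the result for $n=p$ prime via the polynomial method, then extend it to arbitrary $n$ by induction on the number of distinct prime divisors of $n$.

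For the prime case, fix $p$ and consider a sequence $S=(v_{1},\ldots,v_{N})$ in $(\mathbb{Z}/p\mathbb{Z})^{r}\cong C_{p}^{r}$. The polynomial-method core is Chevalley--Warning applied to the system $f_{j}(X)=\sum_{i=1}^{N}v_{i,j}X_{i}^{p-1}$ for $j=1,\ldots,r$. Whenever $N>r(p-1)$, the common zero locus of the $f_{j}$ has cardinality divisible by $p$, hence admits a non-trivial zero whose support indexes a (possibly long) zero-sum subsequence of $S$. Turning this into a bound on $\eta(C_{p}^{r})$ requires constraining the length of the extracted zero-sum to be at most $p$; the standard device is to iterate the extraction, producing a family of pairwise disjoint zero-sum subsequences, and then to combine them via an Erd\H{o}s--Ginzburg--Ziv type argument in a smaller group. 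A careful bookkeeping of this iteration (together with an inner induction on $r$) yields an estimate of the form $\eta(C_{p}^{r})\leq c_{r}(p-1)+1$ for a constant $c_{r}$ depending only on $r$.

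For the reduction to general $n$, I would proceed by induction on $\omega(n)$, the number of distinct prime divisors of $n$. Writing $n=pm$ with $p$ a prime divisor, one has the short exact sequence $0\to pC_{n}^{r}\to C_{n}^{r}\to C_{p}^{r}\to 0$ with $pC_{n}^{r}\cong C_{m}^{r}$. Given a sequence $S$ in $C_{n}^{r}$ of length $c_{r}(n-1)+1$, I would reduce $S$ modulo $p$ and, by iterated application of the prime case to the remaining portion of the reduced sequence, extract a family of pairwise disjoint subsequences of $S$ whose images in $C_{p}^{r}$ have sum zero and length at most $p$. The sum of each such block therefore lies in $pC_{n}^{r}\cong C_{m}^{r}$, and the resulting sequence of block-sums is, by construction, long enough to apply the inductive hypothesis in $C_{m}^{r}$. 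This produces a sub-collection of block-sums summing to zero and comprising at most $m$ blocks; concatenating the corresponding blocks of $S$ gives a zero-sum subsequence of $S$ of length at most $mp=n$, as required.

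The principal obstacle lies in the first stage: the bare Chevalley--Warning argument only delivers a zero-sum whose length is some multiple of $p$, a priori as large as $N$, so one must refine it into a statement that in fact produces \emph{many} disjoint \emph{short} zero-sums. Controlling this iterative peeling requires simultaneous quantitative estimates for the Erd\H{o}s--Ginzburg--Ziv invariant $s(C_{p}^{r})$ and for $\eta(C_{p}^{r})$, and it is precisely this joint induction on $r$ that forces $c_{r}$ to depend on $r$ and constitutes the technical heart of the Alon--Dubiner argument.
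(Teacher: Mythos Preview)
The paper does not prove Theorem~\ref{Alon c_r} at all: it is quoted verbatim as a result of Alon and Dubiner \cite{Alodub95} and used as a black box throughout. There is therefore nothing in the paper to compare your proposal against; you are sketching a proof of an external theorem that the author deliberately imports rather than reproves.

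As for the sketch itself, the reduction from general $n$ to the prime case is essentially sound, though your induction parameter is misstated: writing $n=pm$ with $p$ prime does not decrease $\omega(n)$ unless $p\nmid m$, so you should be inducting on $\Omega(n)$ (the number of prime factors with multiplicity) or directly on $n$, not on $\omega(n)$. More seriously, the prime case is where all the content lies, and here your proposal is only a plan for a plan. You correctly identify that Chevalley--Warning alone gives the Davenport constant $\mathsf{D}(C_p^r)=r(p-1)+1$ but says nothing about the \emph{length} of the extracted zero-sum, and you gesture at ``iterated extraction'' and ``careful bookkeeping'' without indicating how the iteration terminates with a short zero-sum. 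In fact the actual Alon--Dubiner argument for the prime case does not proceed by iterating Chevalley--Warning in this way; it uses a geometric/probabilistic argument about lattice points and subspace projections, with an induction on $r$ that controls how quickly the partial sums of a long sequence fill out $C_p^r$. Your outline, as it stands, does not contain the key idea that makes the prime case work.
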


From now on, we will identify $c_r$ with its smallest possible value in Theorem \ref{Alon c_r}. 
On the one hand, a natural construction shows that $c_r \geq (2^r-1)$. 
Indeed, if $(e_1,\dots,e_r)$ is a basis of $C^r_n$ with $\text{ord}(e_i)=n$ for all $i \in \llbracket 1,r \rrbracket$, it is easily checked that the sequence $S$ consisting of $n-1$ copies of $\sum_{i \in I} e_i$ for each non-empty subset $I \subseteq \llbracket 1,n \rrbracket$ contains no short zero-sum subsequence, so that 
\begin{equation}
\label{lower bound}
(2^r-1)(n-1)+1 \leq \eta(C^r_n) \leq \mathsf{t}(C^r_n).
\end{equation}
In particular, one has 
$\mathsf{t}(C^r_2)=\eta(C^r_2)=2^r$. 
For the time being, the exact value of $\mathsf{t}(G)$ is known for cyclic groups and elementary $2$-groups only.

\medskip
On the other hand, the method used in \cite{Alodub95} yields $c_r \leq \left( cr\ln r\right)^r$, where $c > 0$ is an  absolute constant. 
Also, it readily follows from (\ref{eta rank two}) that it is possible to choose $c_1=1$ and $c_2=3$, with equality in Theorem \ref{Alon c_r}, and it is conjectured in \cite{Alodub95} that there actually is an absolute constant $d > 0$ such that $c_r \leq d^r$ for all $r \geq 1$. For a complete account on $\eta(G)$, see \cite{EdelGero06,GaoSchmid06} and the references contained therein.

\section{New results and plan of the paper}
\label{Section New results and plan of the paper}
Let $\mathcal{P}=\left\{p_1=2<p_2=3<\cdots \right\}$ be the set of prime numbers. 
Given a positive integer $n$, let $\mathcal{D}_n$ be the set of its positive divisors. 
By $P^-(n)$ and $P^+(n)$, we denote the smallest and greatest prime elements of $\mathcal{D}_n$ respectively, with the convention $P^-(1)=P^+(1)=1$. 
Finally, the $p$-adic valuation of $n$ will be denoted by $\nu_p(n)$.

\medskip
In this paper, we prove that in the general case of finite Abelian groups, the currently known upper bound on $\mathsf{t}(G)$ can be improved significantly. 

\medskip
Our starting point will be to prove it first in the case of finite Abelian $p$-groups.
For this purpose, a classical variant of $\mathsf{t}(G)$, introduced by Geroldinger and Schneider in \cite{GeroSchnCross94}, will be studied in Section \ref{Section rho(G)}. 
Even though a single by-product of this investigation (see Corollary \ref{t(G) in the p-group case}) will effectively be used in subsequent sections, we include this study in full, since it may be of interest in view of arithmetical applications. 

\medskip
Then, in Section \ref{Section main theorem}, we prove the main theorem of this paper. 
This theorem provides a new upper bound for $\mathsf{t}(G)$, which depends on the rank and exponent of $G$ only. 
It is proved thanks to an appropriate partition of the divisor lattice of $\exp(G)$. In addition, an interesting special case is the one of finite Abelian groups of rank two, where this theorem can be applied specifying $c_2=3$. 

\begin{theorem}
\label{main result}
Let $G$ be a finite Abelian group of rank $r$ and exponent $n$. Then
$$\mathsf{t}(G) \leq c_r\displaystyle\sum_{d \mid n} \left(\frac{d}{P^{+}(d)^{\nu_{P^+(d)}(d)}}-1\right) + c_r\left(n-1\right) + 1.$$
\end{theorem}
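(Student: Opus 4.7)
The plan is to prove the bound by strong induction on the exponent $n$ of $G$. In the base case, $n$ is a prime power; then every divisor $d \mid n$ is itself a prime power, hence $d/P^+(d)^{\nu_{P^+(d)}(d)} = 1$ and the sum over $d \mid n$ in the statement vanishes, so the bound collapses to $c_r(n-1)+1$. This is exactly the content of Corollary \ref{t(G) in the p-group case} applied to the $p$-group $G$, giving the base case for free.

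For the inductive step, let $p = P^+(n)$, set $q = p^{\nu_p(n)}$ and $m = n/q$ so that $\gcd(m,q)=1$. Decompose $G = K \oplus H$, with $H$ the $p$-Sylow subgroup (of exponent $q$) and $K$ the complementary Hall subgroup (of exponent $m$); both have rank at most $r$. The inductive hypothesis applies to $K$ and provides a bound on $\mathsf{t}(K)$ in terms of $m$, while Corollary \ref{t(G) in the p-group case} gives $\mathsf{t}(H) \leq c_r(q-1)+1$.

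Given a sequence $S$ over $G$ of length at least the claimed bound, I would iteratively extract subsequences $T_1, T_2, \ldots$ from $S$ whose projections $\pi_K(T_i)$ are tiny zero-sums in $K$ (as provided by the inductive bound on $\mathsf{t}(K)$). Each $T_i$ then has $\sigma(T_i) \in H$, and since $\text{ord}_G(g) = \text{ord}_K(g_K)\,\text{ord}_H(g_H)$ for the decomposition $g = g_K + g_H$, one obtains the key inequality
$$\mathsf{k}_G(T_i) \leq \mathsf{k}_K(\pi_K(T_i)) \leq 1.$$
Once enough sums $\sigma(T_i)$ are collected in $H$, applying Corollary \ref{t(G) in the p-group case} to this sequence over the $p$-group $H$ produces a sub-collection summing to zero in $H$, which lifts to a zero-sum subsequence $T \subseteq S$ of $G$.

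The central difficulty, and the main obstacle, is that this combined $T$ need not be tiny in $G$: naively summing the individual bounds $\mathsf{k}_G(T_i) \leq 1$ across the chosen sub-collection could yield a cross-number as large as $\mathsf{t}(H)$. The announced \emph{partition of the divisor lattice $\mathcal{D}_n$} is what overcomes this: one stratifies the extractions by the signature $e = d/P^+(d)^{\nu_{P^+(d)}(d)} \mid m$, which groups divisors of $n$ into chains $\{ep^k : 1 \leq k \leq \nu_p(n)\}$, and performs each extraction inside an appropriate torsion subgroup $G_d$ of rank at most $r$ and exponent $d$. A careful accounting should then show that the element consumption along all such chains contributes exactly the overhead $c_r \sum_{d \mid n}(d/P^+(d)^{\nu_{P^+(d)}(d)} - 1)$, while an additional $c_r(n-1)+1$ elements are required for a closing Alon-Dubiner extraction among elements of order $n$, altogether yielding the claimed bound on $\mathsf{t}(G)$.
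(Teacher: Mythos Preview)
Your inductive framing via the decomposition $G=K\oplus H$ and projection to $K$ is not the paper's approach, and more importantly the gap you yourself identify is real and is not repaired by the final paragraph. The crucial estimate you need is not $\mathsf{k}_G(T_i)\leq 1$ but rather $\mathsf{k}_G(T_i)\leq 1/\text{ord}(\sigma(T_i))$: only then does a tiny zero-sum among the $\sigma(T_i)$'s in the $p$-group $H$ lift to a tiny zero-sum in $G$, since $\mathsf{k}_G(T)=\sum_{j\in J}\mathsf{k}_G(T_j)\leq\sum_{j\in J}1/\text{ord}(\sigma(T_j))=\mathsf{k}_H(T')\leq 1$. Your projection-to-$K$ extraction cannot produce this stronger bound, because a tiny-in-$K$ subsequence may contain elements whose $H$-components have large order while $\sigma(T_i)$ has small order in $H$ due to cancellation.

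The paper avoids induction entirely and works with all primes $q_1<\cdots<q_\ell$ of $n$ simultaneously. For each $i$ it looks at the elements of $S$ whose order $d$ has $P^+(d)=q_i$, and from $S_d$ it extracts, via $\eta_{(d/q_i^{\nu_{q_i}(d)},\,d)}(G)$, subsequences of length at most $d/q_i^{\nu_{q_i}(d)}$ summing into $G_{q_i^{\nu_{q_i}(d)}}$. Because every element of $S_d$ has order \emph{exactly} $d$, such a subsequence has cross number at most $(d/q_i^{\nu_{q_i}(d)})/d=1/q_i^{\nu_{q_i}(d)}\leq 1/\text{ord}(\sigma)$, which is precisely the inequality your argument is missing. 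The $\eta$-cost of these extractions is bounded by Proposition~\ref{estimate of generalized eta numbers} and accounts for the sum $c_r\sum_{d\mid n}(d/P^+(d)^{\nu_{P^+(d)}(d)}-1)$; the number of extracted sums available in each $G_{q_i^{\nu_{q_i}(n)}}$ is bounded by $\mathsf{t}(G_{q_i^{\nu_{q_i}(n)}})-1\leq c_r(q_i^{\nu_{q_i}(n)}-1)$ via Corollary~\ref{t(G) in the p-group case}, and these bounds telescope over $i$ to $c_r(n-1)$. So the term $c_r(n-1)$ is not a single ``closing Alon--Dubiner extraction among elements of order $n$'' as you suggest, but the aggregate capacity of the $q_i$-primary tiny-zero-sum arguments across all primes.
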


\medskip
In Section \ref{Section modified sum of divisors}, we then obtain, as a corollary of Theorem \ref{main result}, the following Alon and Dubiner type upper bound for $\mathsf{t}(G)$. 
In particular, it is easily deduced from (\ref{lower bound}) that this upper bound has the right order of magnitude.
\begin{theorem} 
\label{t(G) est lineaire en n}
For every $r \in \mathbb{N}^*$ there exists a constant $d_r > 0$ such that, for every finite Abelian group $G$ of rank $r$ and exponent $n$, one has
$$\mathsf{t}(G) \leq d_r(n-1)+1.$$
\end{theorem}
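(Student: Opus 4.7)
The plan is to derive Theorem \ref{t(G) est lineaire en n} directly from Theorem \ref{main result} by bounding the divisor-sum correction
$$S(n) := \sum_{d \mid n} \left(\frac{d}{P^{+}(d)^{\nu_{P^+(d)}(d)}} - 1\right)$$
by $n-1$, with an \emph{absolute} constant (in fact, with constant $1$) that is independent of the rank. Once this inequality is secured, substituting into Theorem \ref{main result} will yield $\mathsf{t}(G) \le 2 c_r (n-1) + 1$, so that Theorem \ref{t(G) est lineaire en n} holds with $d_r = 2 c_r$.

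To bound $S(n)$, I will stratify the divisors of $n$ by their largest prime factor. Setting $p = P^+(n)$, $a = \nu_p(n)$, and $m = n/p^a$, every divisor $d > 1$ of $n$ with $P^+(d) = p$ factors uniquely as $d = d' p^k$ with $d' \mid m$ and $1 \le k \le a$, and in that case $d / P^+(d)^{\nu_{P^+(d)}(d)}$ collapses to $d'$. Summing the contributions produces the recursion
$$T(n) := \sum_{\substack{d \mid n \\ d > 1}} \frac{d}{P^{+}(d)^{\nu_{P^+(d)}(d)}} \;=\; a \, \sigma(m) + T(m),$$
where $\sigma$ denotes the usual sum-of-divisors function. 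Since $S(n) \le T(n)$, it will suffice to prove $T(n) \le n - 1$ by strong induction on $n$; the inductive step reduces to the inequality $a \, \sigma(m) \le m (p^a - 1)$.

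Because the prime factors of $m$ are all strictly smaller than $p$, the multiplicativity of $\sigma$ gives $\sigma(m)/m \le h(p) := \prod_{q \text{ prime},\, q < p} q/(q-1)$, so the argument reduces to verifying $a \, h(p) \le p^a - 1$. I plan to split this into two elementary estimates. The first is the telescoping bound $h(p) \le p - 1$, which I will prove by induction over the primes: one has $h(2) = 1$, and if $p'$ denotes the prime following $p$, then $h(p') = h(p) \cdot p/(p-1) \le p \le p' - 1$. The second is the algebraic inequality $a(p - 1) \le p^a - 1$, immediate from the factorization $p^a - 1 = (p-1)(1 + p + \cdots + p^{a-1})$. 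Multiplying the two estimates closes the induction.

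The main conceptual subtlety I anticipate is the first estimate $h(p) \le p - 1$: although $\sigma(m)/m$ is unbounded on $\mathbb{N}^*$ in general (it grows like $e^{\gamma} \log \log m$), what saves the argument is that it is tightly controlled by the \emph{largest} prime factor of $m$, and the sparsity of the primes makes the recursion $h(p') = h(p) \cdot p/(p-1)$ contract precisely enough to sustain the induction with the tight constant $p - 1$. Everything else is routine algebraic manipulation.
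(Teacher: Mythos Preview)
Your proposal is correct and follows essentially the same route as the paper: both derive Theorem~\ref{t(G) est lineaire en n} from Theorem~\ref{main result} by showing the divisor-sum correction is at most $n-1$ via an induction on $\omega(n)$ that hinges on the estimate $\sigma(m)/m \le P^{+}(n)-1$, and both arrive at $d_r = 2c_r$. The only notable difference is cosmetic: you bound $T(n)$ directly rather than first weakening to $f(n)=\sum_{d\mid n} d/P^{+}(d)$ as in Proposition~\ref{modified sum of divisors}, and your prime-to-prime induction $h(p') = h(p)\cdot p/(p-1) \le p \le p'-1$ is a cleaner proof of the key inequality than the paper's Lemma~\ref{primes}, which detours through the auxiliary bound $p_i \ge 2i-1$.
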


The qualitative upper bound of Theorem \ref{t(G) est lineaire en n} is voluntarily given in a form allowing to stress the connection between $d_r$ and $c_r$. 
In this regard, a key argument in our proof of Theorem \ref{t(G) est lineaire en n} actually comes from a simple, albeit somewhat surprising, property of the following arithmetic function
$$f(n)=\displaystyle\sum_{d \mid n} \frac{d}{P^{+}(d)}.$$
Indeed, one can show that $f(n) \leq n$ always holds (see Proposition \ref{modified sum of divisors}). 
Consequently, and even if no particular effort has been made to optimize the constant in Theorem \ref{t(G) est lineaire en n}, it turns out that $d_r$ can be chosen to satisfy $1 \leq d_r/c_r \leq 2$, thus being at most twice as large as the best possible constant we could hope for. 

\medskip
Finally, in Section \ref{Section concluding remarks}, we propose and discuss two open problems on $\mathsf{t}(G)$.

\section{On a variant of $\mathsf{t}(G)$}
\label{Section rho(G)}
Let $G$ be a finite Abelian group, and $d',d \in \mathbb{N}^*$ be two integers such that $d' \mid d \mid \exp(G)$.
This section is devoted to the following variant of $\mathsf{t}(G)$, which was first introduced by Geroldinger and Schneider \cite[Section 3]{GeroSchnCross94} (see also \cite[Section 5.7]{GeroKoch05}):
$$\rho(G)=\max\{\mathsf{k}(S) \mid S \text{ contains no tiny zero-sum subsequence} \}.$$

Using a compression argument, we will obtain a new upper bound for $\rho(G)$, which has the correct order of magnitude and applies to any finite Abelian group $G$. 
For this purpose, we consider the following invariant, which was introduced in \cite{Girard09}.

\medskip
By $\eta_{(d',d)}(G)$ we denote the smallest integer $t \in \mathbb{N}^*$ such that every sequence $S$ over $G_d$ of length $|S| \geq t$ contains a non-empty subsequence $S' \mid S$ of length 
$|S'| \leq d'$ and with sum in $G_{d/d'}$.

\medskip
The numbers $\eta_{(d',d)}(G)$ and $\eta(G)$ are closely related to each other. First, note that the two definitions coincide when $d'=d=\exp(G)$.
In addition, and as shown in \cite[Proposition 3.1]{Girard09}, there exists a subgroup $G_{\upsilon(d',d)} \subseteq G$ such that $\eta_{(d',d)}(G)=\eta(G_{\upsilon(d',d)})$. 
In order to define this particular subgroup $G_{\upsilon(d',d)}$ properly, we introduce the following notation. 
Given the decomposition of $G$ as a product of cyclic groups 
$$G \simeq C_{n_1} \oplus \cdots \oplus C_{n_r}, \text{ with } 1 < n_1 \mid \cdots \mid n_r \in \mathbb{N},$$
we set, for all $i \in \llbracket 1,r \rrbracket$, 
$$A_i=\gcd(d',n_i), \text{ }
B_i=\frac{\mathrm{lcm}(d,n_i)}{\mathrm{lcm}(d',n_i)}, \text{ } \upsilon_i(d',d)=\frac{A_i}{\gcd(A_i,B_i)}.$$
Therefore, whenever $d$ divides $n_i$, we have $\upsilon_i(d',d)=\gcd(d',n_i)=d'$, and in particular $\upsilon_r(d',d)=d'$. We can now state our result on $\mathsf{\eta}_{(d',d)}(G)$.

\begin{proposition} 
\label{propmarrantegenerale} 
Let $G \simeq C_{n_1} \oplus \cdots \oplus C_{n_r}$, with $1 < n_1 \mid \cdots \mid n_r \in \mathbb{N}$, be a finite Abelian group and $d',d \in \mathbb{N}^*$ be such that $d' \mid d \mid \exp(G)$. Then
$$\mathsf{\eta}_{(d',d)}(G)=\mathsf{\eta}\left(C_{\upsilon_1(d',d)} \oplus \cdots \oplus C_{\upsilon_r(d',d)}\right).$$
\end{proposition}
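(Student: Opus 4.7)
The plan is to reduce the restricted zero-sum problem defining $\eta_{(d',d)}(G)$ to the usual $\eta$-problem on $H := C_{\upsilon_1(d',d)} \oplus \cdots \oplus C_{\upsilon_r(d',d)}$ via the endomorphism
\[
\phi \colon G \longrightarrow G,\qquad \phi(x)=(d/d')\,x.
\]
For $x \in G_d$ one has $d'\phi(x)=dx=0$, so $\phi(G_d)\subseteq G_{d'}$, and plainly $\ker(\phi|_{G_d})=G_{d/d'}$. Using the decomposition $G_d\simeq \bigoplus_{i=1}^r C_{\gcd(d,n_i)}$, the image of the $i$-th cyclic factor under $\phi$ is generated in $C_{n_i}$ by $(d/d')(n_i/\gcd(d,n_i))$ and has order $\gcd(d,n_i)/\gcd(d/d',n_i)$. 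A direct manipulation of the $\gcd$'s and $\mathrm{lcm}$'s defining $A_i$ and $B_i$ rewrites this order as $\upsilon_i(d',d)$, so $\phi(G_d)$ is a subgroup of $G$ isomorphic to $H$, and I will identify the two from here on.

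The key observation is that $\exp(H)=d'$. On the one hand, $\upsilon_i\mid A_i\mid d'$ for every $i$; on the other hand the excerpt already records that $\upsilon_r(d',d)=d'$, since $d\mid n_r=\exp(G)$. Combined with $\ker(\phi|_{G_d})=G_{d/d'}$, this yields the fundamental equivalence: for every subsequence $S'$ of a sequence $S$ over $G_d$,
\[
\sigma(S')\in G_{d/d'} \;\Longleftrightarrow\; \sigma(\phi(S'))=0 \text{ in } H,\qquad |S'|=|\phi(S')|.
\]

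Both inequalities then follow formally. For the upper bound $\eta_{(d',d)}(G)\leq \eta(H)$, given a sequence $S$ over $G_d$ with $|S|\geq \eta(H)$, the sequence $\phi(S)$ over $H$ admits by definition of $\eta(H)$ a non-empty zero-sum subsequence of length at most $\exp(H)=d'$; selecting the corresponding indices in $S$ produces the required subsequence by the equivalence above. For the reverse inequality, take an extremal sequence $T_0$ over $H$ of length $\eta(H)-1$ with no non-empty zero-sum subsequence of length $\leq \exp(H)=d'$, and lift each of its terms via any set-theoretic section of the surjection $\phi|_{G_d}\colon G_d \twoheadrightarrow H$; the displayed equivalence shows at once that the resulting sequence over $G_d$ has no subsequence $S'$ with $|S'|\leq d'$ and $\sigma(S')\in G_{d/d'}$, whence $\eta_{(d',d)}(G)\geq \eta(H)$.

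The only non-formal ingredient is the equality $\exp(H)=d'$: without the identity $\upsilon_r(d',d)=d'$, which crucially encodes $d\mid\exp(G)$, the two length constraints $|S'|\leq d'$ in $G$ and $|T|\leq \exp(H)$ in $H$ would not match, and the lower bound argument would only recover an intermediate variant of $\eta(H)$. Everything else is an unwinding of the definitions of $A_i$, $B_i$, $\upsilon_i$ together with a routine coordinatewise arithmetic verification.
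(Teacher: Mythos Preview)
The paper does not actually prove this proposition; it merely quotes it from \cite[Proposition~3.1]{Girard09}, so there is no in-paper argument to compare your attempt against.

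That said, your proof is correct and is precisely the natural approach (and almost certainly the one in the cited reference): transfer the restricted problem on $G_d$ to the ordinary $\eta$-problem on $\phi(G_d)$ via the multiplication-by-$(d/d')$ endomorphism $\phi$, whose kernel on $G_d$ is exactly $G_{d/d'}$. The coordinatewise computation you leave as ``routine'' does go through: the image of the $i$-th factor has order $\gcd(d,n_i)/\gcd(d/d',n_i)$, and a prime-by-prime check confirms this equals $\upsilon_i(d',d)=A_i/\gcd(A_i,B_i)$. The two directions then follow exactly as you describe, the lower bound using that $\phi|_{G_d}\colon G_d\twoheadrightarrow H$ is surjective so that a set-theoretic section exists. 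Your emphasis that $\exp(H)=d'$ is the crux---because $\upsilon_r(d',d)=d'$ relies on $d\mid n_r=\exp(G)$---is exactly right; without it the length constraints would not match and only one inequality would survive.
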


Using Theorem \ref{Alon c_r}, our Proposition \ref{propmarrantegenerale} yields the following useful estimate on the numbers $\eta_{(d',d)}(G)$.

\begin{proposition}
\label{estimate of generalized eta numbers}
For every $r \in \mathbb{N}^*$ there exists a constant $c_r > 0$ such that for every finite Abelian group $G$ of rank $r$, and every $d' \mid d \mid \exp(G)$, one has 
$$\eta_{(d',d)}(G) \leq c_r(d'-1)+1.$$
\end{proposition}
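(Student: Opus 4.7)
The plan is to deduce this estimate directly from Proposition \ref{propmarrantegenerale} combined with Theorem \ref{Alon c_r}. Writing $G \simeq C_{n_1} \oplus \cdots \oplus C_{n_r}$ with $1 < n_1 \mid \cdots \mid n_r$, Proposition \ref{propmarrantegenerale} identifies
$$\eta_{(d',d)}(G) = \eta(H), \qquad H = C_{\upsilon_1(d',d)} \oplus \cdots \oplus C_{\upsilon_r(d',d)},$$
so everything reduces to bounding $\eta(H)$ in terms of $d'$ alone, with no dependence on $d$ or on the $n_i$'s.

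The first step is to observe that $\upsilon_i(d',d)$ divides $d'$ for every $i \in \llbracket 1,r \rrbracket$. This is immediate from the definition, since $\upsilon_i(d',d) = A_i/\gcd(A_i,B_i)$ divides $A_i = \gcd(d',n_i)$, which in turn divides $d'$. Moreover, because $d \mid \exp(G) = n_r$, the remark made right before Proposition \ref{propmarrantegenerale} gives $\upsilon_r(d',d) = d'$. Consequently, $H$ embeds in $C_{d'}^r$ via the natural inclusion $C_{\upsilon_i(d',d)} \hookrightarrow C_{d'}$ on each coordinate, and its exponent is exactly $d'$.

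The second step is to apply Theorem \ref{Alon c_r} to the ambient group $C_{d'}^r$. Any sequence $S$ over $H$ of length $|S| \geq c_r(d'-1)+1$ can also be viewed as a sequence over $C_{d'}^r$, so by Theorem \ref{Alon c_r} it must contain a non-empty zero-sum subsequence $S'$ with $|S'| \leq d'$. Since all elements of $S'$ lie in the subgroup $H$, the sum condition in $C_{d'}^r$ is equivalent to the sum condition in $H$, so $S'$ is zero-sum in $H$ as well; and its length is bounded by $d' = \exp(H)$, making it a short zero-sum subsequence of $H$. This yields $\eta(H) \leq c_r(d'-1)+1$, which combined with the displayed identity proves the proposition.

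I do not foresee any real obstacle here: the mechanism is a standard embedding trick, and once Proposition \ref{propmarrantegenerale} is granted, the only content of the argument is the routine divisibility check $\upsilon_i(d',d) \mid d'$ with equality at $i = r$. All of the difficulty is packaged inside Theorem \ref{Alon c_r}, which is precisely where the constant $c_r$ comes from, and this is also why the very same constant appears in the statement to be proved.
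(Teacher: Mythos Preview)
Your proof is correct and follows essentially the same route as the paper: reduce to $\eta(H)$ via Proposition~\ref{propmarrantegenerale}, check that each $\upsilon_i(d',d)\mid d'$ with $\upsilon_r(d',d)=d'$ so that $H\subseteq C_{d'}^r$ with $\exp(H)=d'$, and then apply Theorem~\ref{Alon c_r}. The only cosmetic difference is that the paper quotes the general monotonicity fact $\eta(H)\leq\eta(G)$ for subgroups of equal exponent, whereas you unpack that inequality explicitly by viewing sequences over $H$ as sequences over $C_{d'}^r$.
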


\begin{proof}
Let us consider the group 
$$G_{\upsilon(d',d)}=C_{\upsilon_1(d',d)} \oplus \cdots \oplus C_{\upsilon_r(d',d)}.$$
On the one hand, Proposition \ref{propmarrantegenerale} states that $\eta_{(d',d)}(G)=\eta(G_{\upsilon(d',d)})$. 
On the other hand, it is easily seen that $\upsilon_i(d',d) \mid d'$ for all $i \in \llbracket 1,r \rrbracket$ and $\upsilon_r(d',d)=d'$, which implies that $G_{\upsilon(d',d)}$ is a subgroup of $C^r_{d'}$ of exponent $d'$.
Now, since $\eta(H) \leq \eta(G)$ holds for all groups $H \subseteq G$ such that $\exp(H)=\exp(G)$, Theorem \ref{Alon c_r} gives
$$\eta_{(d',d)}(G) = \eta(G_{\upsilon(d',d)}) \leq \eta(C^r_{d'}) \leq c_r(d'-1)+1.$$ 
\end{proof}

Now, we can prove the main theorem of this section.

\begin{theorem}
\label{Profils}
Let $G$ be a finite Abelian group of exponent $n$ and $\mathcal{D}_{n}=\left\{d_1,\dots,d_{m}\right\}$. 
Let also $S$ be a sequence over $G$ containing no tiny zero-sum subsequence, reaching the maximum $\mathsf{k}(S)=\rho(G)$, and being of minimal length regarding this property. 
Then, the $m$-tuple $x=\left(x_{d_1},\dots,x_{d_m}\right)$, where $x_d=|S_d|$ for all $d$, is an element of the polytope
$$\mathbb{P}_G=\{x \in \mathbb{N}^m \text{ } | \text{ } f_d(x) \geq 0, \text{ } d \in \mathcal{D}_n\},$$ 
where 
$$f_d(x) = \displaystyle\min_{d' \in \mathcal{D}_d\backslash\{1\}} \left(\mathsf{\eta}_{(d',d)}(G)-1 - x_d\right).$$
\end{theorem}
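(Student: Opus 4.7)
The statement says that if $S$ is an extremal, minimal-length cross-number witness for $\rho(G)$, then the number of order-$d$ elements in $S$, which is $x_d=|S_d|$, must satisfy $x_d \leq \eta_{(d',d)}(G)-1$ for every divisor $d$ of $n$ and every divisor $d' \neq 1$ of $d$. I would prove this by contradiction using the now-standard compression/replacement trick: assuming the bound fails, one extracts a short zero-sum subsequence from $S_d$, collapses it to its sum, and shows that the resulting shorter sequence still attains the maximum $\rho(G)$ and still has no tiny zero-sum subsequence, violating the minimality of $|S|$.

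In detail, suppose $x_d \geq \eta_{(d',d)}(G)$ for some $d \mid n$ and some $d' \mid d$ with $d' > 1$. Since every element of $S_d$ lies in $G_d$, the definition of $\eta_{(d',d)}(G)$ yields a non-empty subsequence $T \mid S_d$ with $|T| \leq d'$ and $\sigma(T) \in G_{d/d'}$, so $\mathrm{ord}(\sigma(T))$ divides $d/d'$. A single element of $S_d$ has order exactly $d$, so $|T|=1$ would force $d \mid d/d'$, hence $d'=1$; thus in fact $|T| \geq 2$. Moreover $\sigma(T)=0$ is impossible: it would make $T$ itself a zero-sum subsequence of $S$ with $\mathsf{k}(T)=|T|/d \leq d'/d \leq 1$, i.e.\ a tiny zero-sum subsequence, contradicting the choice of $S$. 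Hence $\sigma(T) \neq 0$ and $1 \leq \mathrm{ord}(\sigma(T)) \leq d/d'$.

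Now define the shorter sequence $S''$ over $G$ by removing the terms of $T$ from $S$ and appending the single element $\sigma(T)$, so $|S''|=|S|-|T|+1 < |S|$. For cross numbers,
$$\mathsf{k}(S'')-\mathsf{k}(S) = \frac{1}{\mathrm{ord}(\sigma(T))} - \frac{|T|}{d} \geq \frac{d'}{d}-\frac{d'}{d}=0,$$
using $\mathrm{ord}(\sigma(T)) \leq d/d'$ and $|T| \leq d'$. So $\mathsf{k}(S'') \geq \mathsf{k}(S) = \rho(G)$, and by definition of $\rho(G)$ we must in fact have equality provided $S''$ has no tiny zero-sum subsequence. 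To check this last point, suppose $U \mid S''$ satisfies $\sigma(U)=0$ and $\mathsf{k}(U) \leq 1$. If $U$ does not involve the new term $\sigma(T)$, then $U \mid S$, contradicting the hypothesis on $S$. Otherwise write $U = U' \cdot (\sigma(T))$ with $U' \mid S \setminus T$, and set $V = U' \cdot T \mid S$; then $\sigma(V)=\sigma(U')+\sigma(T)=\sigma(U)=0$ and
$$\mathsf{k}(V)=\mathsf{k}(U')+\mathsf{k}(T) \leq \mathsf{k}(U')+\frac{1}{\mathrm{ord}(\sigma(T))} = \mathsf{k}(U) \leq 1,$$
so $V$ is a tiny zero-sum subsequence of $S$, again a contradiction.

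Therefore $S''$ has no tiny zero-sum subsequence and $\mathsf{k}(S'') = \rho(G)$, while $|S''|<|S|$, contradicting the minimality of $|S|$. This establishes $x_d \leq \eta_{(d',d)}(G)-1$ for every admissible pair $(d',d)$, i.e.\ $f_d(x) \geq 0$, so $x \in \mathbb{P}_G$. The only mildly delicate step is the verification that the replacement cannot create a new tiny zero-sum subsequence, which is why the cross-number comparison $1/\mathrm{ord}(\sigma(T)) \geq |T|/d$ must be made to work in both directions (for $\mathsf{k}(S'')$ and for pulling $U$ back to $V$); everything else is a direct consequence of the definition of $\eta_{(d',d)}(G)$ and the extremality of $S$.
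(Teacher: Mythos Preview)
Your proof is correct and follows essentially the same compression/replacement argument as the paper: assume some $x_{d}\geq\eta_{(d',d)}(G)$, extract a short subsequence $T$ of $S_d$ with $\sigma(T)\in G_{d/d'}$, replace $T$ by $\sigma(T)$, and use the inequality $\mathsf{k}(T)=|T|/d\leq d'/d\leq 1/\mathrm{ord}(\sigma(T))$ in both directions to show the new sequence still has maximal cross number and no tiny zero-sum subsequence, contradicting minimality of $|S|$. Your explicit verification that $|T|\geq 2$ and $\sigma(T)\neq 0$ is a welcome addition; the paper asserts $1<X$ without comment and handles $\sigma=0$ only implicitly through the pull-back map $\varphi$.
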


\begin{proof}
Let $S$ be a sequence over $G$ containing no tiny zero-sum subsequence, satisfying $\mathsf{k}(S)=\rho(G)$ and being of minimal length regarding this property. 
Suppose also that the $m$-tuple $x=(x_{d_1},\dots,x_{d_m})$, where $x_d=|S_d|$ for all $d$, is not an element of the polytope $\mathbb{P}_G$.

\medskip
Then, there exists $d_0 \in \mathcal{D}_{n}$ such that $f_{d_0}(x)<0$, which means there is a $d'_0 \in \mathcal{D}_{d_0}\backslash\{1\}$ satisfying $x_{d_0} \geq \eta_{(d'_0,d_0)}(G)$. 
So, the sequence $S$ contains $X$ elements $g_1,\dots,g_X$ of order $d_0$, with $1 < X \leq d'_0$, the sum $\sigma$ of which is an element of order $\tilde{d_0}$ dividing $d_0/d'_0$.

\medskip
Let $T$ be the sequence obtained from $S$ by replacing these $X$ elements by their sum.  
Reciprocally, for every subsequence $T'$ of $T$ containing $\sigma$, let us denote by $\varphi(T')$ the subsequence of $S$ obtained from $T'$ by replacing $\sigma$ by $g_1,\dots,g_X$. 
In particular, note that 
\begin{eqnarray*}
\mathsf{k}(\varphi(T')) & = & \mathsf{k}(T') - \frac{1}{\text{ord}(\sigma)} + \displaystyle\sum^X_{i=1} \frac{1}{\text{ord}(g_i)} \\
                         & = & \mathsf{k}(T') + \frac{X}{d_0} - \frac{1}{\tilde{d_0}} \\
                         & \leq & \mathsf{k}(T') + \frac{X}{d_0} - \frac{d'_0}{d_0} \\
                         & \leq & \mathsf{k}(T').
\end{eqnarray*}

First, $T$ contains no tiny zero-sum subsequence. 
Indeed, if $T'$ is a non-empty zero-sum subsequence of $T$, then either $T'$ contains $\sigma$ so that $\varphi(T')$ is a non-empty zero-sum subsequence of $S$, which yields 
$$\mathsf{k}(T') \geq \mathsf{k}(\varphi(T')) > 1,$$
or $T'$ does not contain $\sigma$, which implies that $T'$ is a subsequence of $S$ and $\mathsf{k}(T') > 1$. 

\medskip
Second, it follows from the equality $\varphi(T)=S$ that $\mathsf{k}(T) \geq \mathsf{k}(S)=\rho(G)$. 
Therefore, $T$ is a sequence over $G$ containing no tiny zero-sum subsequence such that $\mathsf{k}(T)=\rho(G)$ and $|T|=|S|-X+1<|S|$, a contradiction.
\end{proof}

Keeping in mind the notation used in Theorem \ref{Profils}, we now obtain the following immediate corollary, giving a general upper bound for $\rho(G)$, expressed as the solution of an integer linear program.

\begin{corollary} 
\label{Upper bound for rho(G)} 
For every finite Abelian group $G$ of exponent $n$, one has
$$\rho(G) \leq \displaystyle\max_{x \in \mathbb{P}_G} \displaystyle\sum_{d \mid n} \frac{x_d}{d}.$$
\end{corollary}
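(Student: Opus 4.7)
The plan is to derive this corollary as a near-tautological consequence of Theorem \ref{Profils}, combining it with the observation that the cross number of a sequence is a linear function of its profile $x=(|S_{d_1}|,\dots,|S_{d_m}|)$.

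First, I would make sure that the maximum in the definition of $\rho(G)$ is actually attained, so that Theorem \ref{Profils} can legitimately be applied to a concrete sequence. Any sequence $S$ over $G$ containing no tiny zero-sum subsequence satisfies $|S|\leq\mathsf{t}(G)-1$, which is finite; the supremum in the definition of $\rho(G)$ therefore runs over a finite set of sequences and is attained. I would pick any such $S$ realizing $\mathsf{k}(S)=\rho(G)$ and of minimal length with respect to this property. Theorem \ref{Profils} then guarantees that its profile $x=(|S_{d_1}|,\dots,|S_{d_m}|)$ belongs to $\mathbb{P}_G$.

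Second, I would rewrite the cross number of $S$ by regrouping terms according to the order of the elements:
$$\rho(G)=\mathsf{k}(S)=\displaystyle\sum_{i=1}^{|S|}\frac{1}{\text{ord}(g_i)}=\displaystyle\sum_{d\mid n}\frac{|S_d|}{d}=\displaystyle\sum_{d\mid n}\frac{x_d}{d}.$$
Since $x\in\mathbb{P}_G$, the right-hand side is bounded above by $\displaystyle\max_{y\in\mathbb{P}_G}\sum_{d\mid n} y_d/d$, which is precisely the asserted inequality. There is essentially no obstacle here: Theorem \ref{Profils} has already packaged all the combinatorial content of the problem into the polytope $\mathbb{P}_G$, and the corollary amounts to the remark that the cross number is a linear functional of the profile vector.
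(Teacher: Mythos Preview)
Your argument is correct and is exactly the immediate derivation the paper has in mind: the paper states the corollary as a direct consequence of Theorem~\ref{Profils} without giving further details, and your two steps (existence of a minimal-length maximizer so that Theorem~\ref{Profils} applies, then rewriting $\mathsf{k}(S)=\sum_{d\mid n} x_d/d$) are precisely what is needed.
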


It is now possible, using Proposition \ref{estimate of generalized eta numbers}, to deduce the following quantitative result from Theorem \ref{Profils}.

\begin{theorem} 
\label{majoration rho(G)}
Let $G$ be a finite Abelian group of rank $r$ and exponent $n$. Then 
$$\rho(G) \leq c_r \displaystyle\sum_{d \mid n} \frac{P^-(d)-1}{d}.$$
\end{theorem}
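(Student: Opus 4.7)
The plan is to combine the structural constraint supplied by Theorem \ref{Profils} with the quantitative estimate of Proposition \ref{estimate of generalized eta numbers}, making the natural extremal choice $d'=P^-(d)$ inside the minimum defining $f_d$.

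First, I would pick a sequence $S$ over $G$ realizing $\mathsf{k}(S)=\rho(G)$, containing no tiny zero-sum subsequence, and of minimal length among such sequences. Setting $x_d=|S_d|$ for every $d \mid n$, the definition of the cross number gives
$$\rho(G) = \mathsf{k}(S) = \displaystyle\sum_{d \mid n} \frac{x_d}{d}.$$

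Next, for each divisor $d>1$, Theorem \ref{Profils} forces $x_d \leq \eta_{(d',d)}(G)-1$ for every $d' \in \mathcal{D}_d \setminus \{1\}$. Since the bound $c_r(d'-1)+1$ from Proposition \ref{estimate of generalized eta numbers} is increasing in $d'$, the sharpest resulting constraint is obtained by taking $d'$ as small as possible, namely $d'=P^-(d)$, the smallest prime divisor of $d$. This yields $x_d \leq c_r(P^-(d)-1)$. For $d=1$, the only element of order $1$ is $0$, and the singleton $(0)$ is already a tiny zero-sum subsequence (sum $0$, cross number $1$), so $x_1=0$; with the convention $P^-(1)=1$ the corresponding term $(P^-(1)-1)/1$ also vanishes, so both sides agree at $d=1$.

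Summing these inequalities over all $d \mid n$ produces the desired estimate $\rho(G) \leq c_r\sum_{d \mid n}(P^-(d)-1)/d$. The main conceptual work has already been discharged in Theorem \ref{Profils} and in Proposition \ref{estimate of generalized eta numbers}; the only genuinely new observation is that, coordinate by coordinate, the tightest linear constraint cut out by the polytope $\mathbb{P}_G$ is the one associated with the smallest admissible $d'$, namely $d'=P^-(d)$. I do not foresee a serious obstacle beyond being careful with the boundary case $d=1$.
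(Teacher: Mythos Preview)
Your proposal is correct and follows essentially the same route as the paper: apply Theorem~\ref{Profils} to get $x_d \leq \eta_{(d',d)}(G)-1$, specialize to $d'=P^-(d)$, bound via Proposition~\ref{estimate of generalized eta numbers}, and sum. Your explicit treatment of the case $d=1$ and your remark on why $d'=P^-(d)$ is the natural choice are welcome clarifications, but the argument is otherwise identical to the paper's.
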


\begin{proof}
Using Theorem \ref{Profils} and Proposition \ref{estimate of generalized eta numbers}, we indeed obtain
\begin{eqnarray*}
\rho(G) & \leq & \displaystyle\sum_{d \mid n} \frac{\eta_{(P^-(d),d)}(G)-1}{d}\\
              & \leq & c_r \displaystyle\sum_{d \mid n} \frac{P^-(d)-1}{d}.
\end{eqnarray*}
\end{proof}

Theorem \ref{majoration rho(G)} actually improves on the best known upper bound $\rho(G) \leq |G|/P^-(n)$ proved by Geroldinger and Schneider \cite[Lemma 2.1]{GeroSchnCross94}.
In addition, a simple study of the arithmetic function involved in our result (see \cite[Lemma 5.1]{Girard09}) shows there exists a constant $\delta_r >0$ such that, for every finite Abelian group $G$ of rank $r$ and exponent $n$, one has $\rho(G) \leq \delta_r\omega(n)$, where $\omega(n)$ denotes the number of distinct prime divisors of $n$.

\medskip
On the other hand, Theorem \ref{majoration rho(G)} provides us with the following useful result on $\mathsf{t}(G)$ in the case of finite Abelian $p$-groups. 

\begin{corollary}
\label{t(G) in the p-group case}
Let $p \in \mathcal{P}$. Then, for all $\alpha,r \in \mathbb{N}^*$, one has  
$$\mathsf{t}(C^r_{p^{\alpha}}) \leq c_r\left(p^{\alpha}-1\right)+1.$$
\end{corollary}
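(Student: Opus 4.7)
The plan is to combine Theorem \ref{majoration rho(G)} with an elementary observation linking $\mathsf{t}(G)$ to $\rho(G)$ that is particularly clean in the $p$-group case. For a general finite Abelian group $G$ of exponent $n$, any element $g \in G$ satisfies $\mathrm{ord}(g) \leq n$, so for any sequence $S$ over $G$,
$$\mathsf{k}(S) = \sum_{g \in S} \frac{1}{\mathrm{ord}(g)} \geq \frac{|S|}{n}.$$
Hence if $S$ contains no tiny zero-sum subsequence, then $|S|/n \leq \mathsf{k}(S) \leq \rho(G)$, which yields the general inequality $\mathsf{t}(G) \leq n\,\rho(G) + 1$.

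Applied to $G = C^r_{p^\alpha}$ with $n = p^\alpha$, this reduces the problem to bounding $\rho(G)$. The key point is that the divisors of $p^\alpha$ form a chain $1 \mid p \mid p^2 \mid \cdots \mid p^\alpha$, on which the arithmetic function in Theorem \ref{majoration rho(G)} is very simple: we have $P^-(1)=1$ and $P^-(p^i)=p$ for $i\geq 1$. Plugging into Theorem \ref{majoration rho(G)} gives a geometric sum:
$$\rho(C^r_{p^\alpha}) \leq c_r \sum_{i=1}^{\alpha} \frac{p-1}{p^i} = c_r\,(p-1) \cdot \frac{1 - p^{-\alpha}}{p-1} = c_r \cdot \frac{p^\alpha - 1}{p^\alpha}.$$
Multiplying by $n = p^\alpha$ and adding $1$ then yields exactly $\mathsf{t}(C^r_{p^\alpha}) \leq c_r(p^\alpha - 1) + 1$, as claimed.

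There is no serious obstacle here: the argument is essentially two lines once Theorem \ref{majoration rho(G)} is in hand. The only tiny point worth pausing on is the crude bound $\mathsf{k}(S) \geq |S|/\exp(G)$, which becomes sharp in the $p$-group setting precisely because the divisor structure is a chain; this is why the approach gives the right order of magnitude for $p$-groups but requires the more delicate partition argument of Section \ref{Section main theorem} for general groups. So my write-up will consist of (i) stating the inequality $\mathsf{t}(G) \leq \exp(G)\,\rho(G) + 1$, (ii) computing the geometric sum appearing in Theorem \ref{majoration rho(G)} when $n = p^\alpha$, and (iii) combining the two to obtain the claimed bound.
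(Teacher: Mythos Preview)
Your proposal is correct and follows essentially the same approach as the paper: both apply Theorem~\ref{majoration rho(G)} to evaluate the geometric sum $\sum_{i=1}^\alpha (p-1)/p^i = (p^\alpha-1)/p^\alpha$, then use the trivial bound $\mathsf{k}(S) \geq |S|/\exp(G)$ to pass from $\rho(G)$ to $\mathsf{t}(G)$. The only difference is presentational---you state the general inequality $\mathsf{t}(G) \leq \exp(G)\,\rho(G)+1$ explicitly before specializing, whereas the paper writes the chain $|S|/p^\alpha \leq \mathsf{k}(S) \leq \rho(G)$ directly for the $p$-group case.
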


\begin{proof}
Let $G \simeq C^r_{p^{\alpha}}$, where $\alpha,r \in \mathbb{N}^*$ and $p \in \mathcal{P}$. Using Theorem \ref{majoration rho(G)}, one has
$$\rho(G) \leq c_r \displaystyle\sum_{i=1}^\alpha \left(\frac{p-1}{p^{i}}\right)=c_r \left(\frac{p^\alpha-1}{p^\alpha}\right).$$
Then, for every sequence $S$ over $G$ containing no tiny zero-sum subsequence, one has
$$\frac{|S|}{p^\alpha} \leq \mathsf{k}(S) \leq \rho(G) \leq c_r \left(\frac{p^\alpha-1}{p^\alpha}\right),$$
which gives $|S| \leq c_r \left(p^\alpha-1\right)$ and completes the proof.

\end{proof}
As already mentioned, an interesting special case is the one of finite Abelian groups of rank two. 
In this case, specifying $c_2=3$ in Corollary \ref{t(G) in the p-group case} implies that, for all primes $p$ and $\alpha \in \mathbb{N}^*$, the equality 
\begin{equation}
\label{t rank two}
\mathsf{t}\left(C_{p^\alpha} \oplus C_{p^\alpha}\right)=3p^{\alpha}-2=\eta\left(C_{p^\alpha} \oplus C_{p^\alpha}\right)
\end{equation} 
holds. Building on the case where $G$ has prime power exponent, we can now turn to the general case of finite Abelian groups.

\section{Proof of the main theorem}
\label{Section main theorem}
Let $G$ be a finite Abelian group of exponent $n=q^{\alpha_1}_1 \cdots q^{\alpha_\ell} _\ell$, with $q_1 < \cdots < q_\ell$, and let $S$ be a sequence over $G$. 
We consider the following partition:
$$\mathcal{D}_n \backslash \{1\}=\bigcup^{\ell}_{i=1} \mathcal{A}_i, \text{ where } \mathcal{A}_i=\{d \in \mathcal{D}_n \backslash \{1\} : P^+(d)=q_i\}.$$
In particular, for all $d \in \mathcal{A}_i$, one has $d \leq \Delta_i=q^{\alpha_1}_1 \cdots q^{\alpha_i} _i$. 
Now, for every $d \in \mathcal{A}_i$, we denote by $k_d$ the smallest integer such that
$$|S_{d}| < k_d\frac{d}{q^{\nu_{q_i}(d)}_i} + \eta_{\left(\frac{d}{q^{\nu_{q_i}(d)}_i},d\right)}(G).$$

The number $k_d$ has the following combinatorial interpretation.
\begin{lem}
\label{lego} 
Let $G$ be a finite Abelian group of exponent $n$, and $d \in \mathcal{A}_i$. 
Let also $S$ be a sequence over $G$. 
Then $S_d$ contains at least $k_d$ disjoint non-empty subsequences $S'_1,\dots,S'_{k_d}$ such that, for all $j \in \llbracket 1,k_d \rrbracket$,
$$\sigma(S'_j) \in G_{q^{\nu_{q_i}(d)}_i} \text{ and } \mathsf{k}(S'_j) \leq \frac{1}{q^{\nu_{q_i}(d)}_i}.$$
\end{lem}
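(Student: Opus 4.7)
The plan is to extract the $k_d$ subsequences greedily, one after another, by repeatedly invoking the definition of $\eta_{(d',d)}(G)$ with $d'=d/q^{\nu_{q_i}(d)}_i$. The first observation is that, since $k_d$ is minimal with the strict inequality in its definition, the value $k_d-1$ must fail it, which yields the weak inequality
$$|S_d| \geq (k_d-1)\,d' + \eta_{(d',d)}(G).$$
Moreover, every element of $S_d$ has order exactly $d$, so $S_d$ is a sequence over $G_d$, which is precisely the ambient set appearing in the definition of $\eta_{(d',d)}(G)$. Note also that $d' \mid d \mid \exp(G)$, so the invariant is indeed defined.

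Next, I would iterate on $j$ from $0$ to $k_d-1$. Suppose that $j$ pairwise disjoint non-empty subsequences $S'_1,\ldots,S'_j$ of $S_d$ have already been produced, each of length at most $d'$ and with sum in $G_{d/d'}=G_{q^{\nu_{q_i}(d)}_i}$. The sequence obtained from $S_d$ by removing all elements of $S'_1,\ldots,S'_j$ has length at least
$$|S_d|-j\,d' \geq (k_d-1-j)\,d' + \eta_{(d',d)}(G) \geq \eta_{(d',d)}(G),$$
since $j \leq k_d-1$. The definition of $\eta_{(d',d)}(G)$ therefore produces a further non-empty subsequence $S'_{j+1}$ drawn from the remaining elements, of length at most $d'$ and with sum in $G_{d/d'}$, automatically disjoint from the previous ones. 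Repeating the extraction $k_d$ times delivers the required collection.

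It only remains to check the cross-number bound. By construction, each $S'_j$ consists of at most $d'$ elements all of order $d$, hence
$$\mathsf{k}(S'_j) \leq \frac{d'}{d} = \frac{1}{q^{\nu_{q_i}(d)}_i},$$
which concludes the proof. Since the statement is essentially a quantitative packing version of the definition of $\eta_{(d',d)}(G)$, I do not expect any genuine difficulty; the only point requiring a little care is the bookkeeping that translates the minimality of $k_d$ into the weak lower bound $|S_d| \geq (k_d-1)\,d' + \eta_{(d',d)}(G)$ driving the iteration, together with the verification that the key quotient $d/d'$ equals exactly $q^{\nu_{q_i}(d)}_i$, so that the membership and cross-number conditions take the form required by the lemma.
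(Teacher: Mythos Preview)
Your proposal is correct and follows essentially the same approach as the paper: both argue by greedy extraction (equivalently, induction on the number of subsequences already found), using the minimality of $k_d$ to obtain $|S_d|\ge (k_d-1)d'+\eta_{(d',d)}(G)$ and then peeling off one subsequence at a time via the definition of $\eta_{(d',d)}(G)$, before converting the length bound $|S'_j|\le d'$ into the cross-number bound $\mathsf{k}(S'_j)=|S'_j|/d\le 1/q_i^{\nu_{q_i}(d)}$.
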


\begin{proof}
First, we shall prove by induction on $k \in \llbracket 0,k_d \rrbracket$ that $S_d$ contains at least $k$ disjoint non-empty subsequences $S'_1,\dots,S'_k$ such that, for all $j \in \llbracket 1,k \rrbracket$, 
$$\sigma(S'_j) \in G_{q^{\nu_{q_i}(d)}_i} \text{ and } |S'_j| \leq \frac{d}{q^{\nu_{q_i}(d)}_i}.$$
If $k=0$ then this statement is clearly true. 
Now, assume that the statement holds for some $k \in \llbracket 0,k_d-1 \rrbracket$, and let us prove that it holds for $k+1$ as well.
By the induction hypothesis, we already know that $S_d$ contains $k$ disjoint non-empty subsequences $S'_1,\dots,S'_k$ such that, for all $j \in \llbracket 1,k \rrbracket$, 
$$\sigma(S'_j) \in G_{q^{\nu_{q_i}(d)}_i} \text{ and } |S'_j| \leq \frac{d}{q^{\nu_{q_i}(d)}_i}.$$
Moreover, the sequence $T_d$ obtained from $S_d$ by deleting all elements of $S'_1,\dots,S'_k$ satisfies
\begin{eqnarray*}
|T_d| & = & |S_d| - \displaystyle\sum^k_{j=1} |S'_j| \\
          & \geq & \left(k_d-1\right)\frac{d}{q^{\nu_{q_i}(d)}_i} + \eta_{\left(\frac{d}{q^{\nu_{q_i}(d)}_i},d\right)}(G) -k\frac{d}{q^{\nu_{q_i}(d)}_i}\\ 
          &  \geq & \eta_{\left(\frac{d}{q^{\nu_{q_i}(d)}_i},d\right)}(G), 
\end{eqnarray*}
so that $S_d$ contains a non-empty subsequence $S'_{k+1}$ disjoint from $S'_1,\dots,S'_k$ such that
$$\sigma(S'_{k+1}) \in G_{q^{\nu_{q_i}(d)}_i} \text{ and } |S'_{k+1}| \leq \frac{d}{q^{\nu_{q_i}(d)}_i},$$
which completes the induction. Therefore, $S_d$ contains $k_d$ disjoint non-empty subsequences $S'_1,\dots,S'_{k_d}$ such that, for all $j \in \llbracket 1,k_d \rrbracket$, 
$$\sigma(S'_j) \in G_{q^{\nu_{q_i}(d)}_i} \text{ and } |S'_j| \leq \frac{d}{q^{\nu_{q_i}(d)}_i}.$$
In addition, for all $j \in \llbracket 1,k_d \rrbracket$, one clearly has
$$\mathsf{k}(S'_j) = \frac{|S'_j|}{d} \leq \frac{1}{q^{\nu_{q_i}(d)}_i},$$
and the lemma is proved.
\end{proof}

\begin{corollary}
\label{lego corollary} 
Let $G$ be a finite Abelian group of exponent $n$, and $d \in \mathcal{A}_i$. 
Let also $S$ be a sequence over $G$. 
Then $S_d$ contains at least $k_d$ disjoint non-empty subsequences $S'_1,\dots,S'_{k_d}$ such that, for all $j \in \llbracket 1,k_d \rrbracket$,
$$\sigma(S'_j) \in G_{q^{\nu_{q_i}(n)}_i} \text{ and } \mathsf{k}(S'_j) \leq \frac{1}{\text{\em ord\em}\left(\sigma\left(S'_j\right)\right)}.$$
\end{corollary}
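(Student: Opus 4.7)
My strategy is to derive Corollary \ref{lego corollary} directly from Lemma \ref{lego} by two elementary monotonicity observations, without reopening the inductive construction. Concretely, I would take the $k_d$ disjoint subsequences $S'_1,\dots,S'_{k_d}$ furnished by Lemma \ref{lego}, which already satisfy $\sigma(S'_j) \in G_{q_i^{\nu_{q_i}(d)}}$ and $\mathsf{k}(S'_j) \leq 1/q_i^{\nu_{q_i}(d)}$, and show that these very subsequences verify the two slightly different conclusions stated in the corollary.

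For the first conclusion, I would use the fact that $d$ divides $n$, so $\nu_{q_i}(d) \leq \nu_{q_i}(n)$, and hence $q_i^{\nu_{q_i}(d)}$ divides $q_i^{\nu_{q_i}(n)}$. Since the subgroups $G_m=\{x\in G:mx=0\}$ are increasing in $m$ (with respect to divisibility), we get the inclusion $G_{q_i^{\nu_{q_i}(d)}} \subseteq G_{q_i^{\nu_{q_i}(n)}}$, which immediately upgrades the containment $\sigma(S'_j)\in G_{q_i^{\nu_{q_i}(d)}}$ into $\sigma(S'_j)\in G_{q_i^{\nu_{q_i}(n)}}$.

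For the second conclusion, I would observe that the containment $\sigma(S'_j)\in G_{q_i^{\nu_{q_i}(d)}}$ precisely means that $\mathrm{ord}(\sigma(S'_j))$ divides $q_i^{\nu_{q_i}(d)}$; in particular $\mathrm{ord}(\sigma(S'_j)) \leq q_i^{\nu_{q_i}(d)}$, so that
\[
\frac{1}{\mathrm{ord}(\sigma(S'_j))} \;\geq\; \frac{1}{q_i^{\nu_{q_i}(d)}} \;\geq\; \mathsf{k}(S'_j),
\]
where the last inequality is exactly the cross-number bound from Lemma \ref{lego}. This yields $\mathsf{k}(S'_j) \leq 1/\mathrm{ord}(\sigma(S'_j))$, as required. (The borderline case $\sigma(S'_j)=0$ causes no trouble since $\mathsf{k}(S'_j)\leq 1/q_i\leq 1$.)

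\textbf{Main obstacle.} There really is none of a combinatorial nature: the entire extraction work has already been done in the proof of Lemma \ref{lego}. The only thing to be careful about is the direction of the two monotonicity statements — that $G_a\subseteq G_b$ when $a\mid b$, and that $\mathrm{ord}(g)\leq m$ when $g\in G_m$ — which is precisely what allows the \emph{weaker-looking} formulation of the corollary to be a genuine consequence of the \emph{stronger-looking} conclusion of the lemma. In short, the corollary is just a repackaging of Lemma \ref{lego} in a form tailored to the needs of the proof of Theorem \ref{main result}.
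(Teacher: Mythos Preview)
Your argument is correct and follows essentially the same route as the paper: invoke Lemma \ref{lego}, then use the inclusion $G_{q_i^{\nu_{q_i}(d)}}\subseteq G_{q_i^{\nu_{q_i}(n)}}$ (since $d\mid n$) for the first conclusion, and the fact that $\sigma(S'_j)\in G_{q_i^{\nu_{q_i}(d)}}$ forces $\mathrm{ord}(\sigma(S'_j))\mid q_i^{\nu_{q_i}(d)}$ for the second. The paper's proof is terser and leaves the second monotonicity implicit, but the substance is identical.
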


\begin{proof} 
By Lemma \ref{lego}, $S_d$ contains at least $k_d$ disjoint non-empty subsequences $S'_1,\dots,S'_{k_d}$ such that, for all $j \in \llbracket 1,k_d \rrbracket$,
$$\sigma(S'_j) \in G_{q^{\nu_{q_i}(d)}_i} \text{ and } \mathsf{k}(S'_j) \leq \frac{1}{q^{\nu_{q_i}(d)}_i}.$$
Thus, the desired result directly follows from the fact that, for all $j \in \llbracket 1,k_d\rrbracket$, one has
$$\sigma(S'_j) \in G_{q^{\nu_{q_i}(d)}_i} \subseteq G_{q^{\nu_{q_i}(n)}_i}.$$
\end{proof}

\begin{lem}
\label{legobis} 
Let $G$ be a finite Abelian group of exponent $n$, and let $S$ be a sequence over $G$ containing no tiny zero-sum subsequence. 
Then, for all $i \in \llbracket 1,\ell \rrbracket$, 
$$\displaystyle\sum_{d \in \mathcal{A}_i } k_d \leq \mathsf{t}\left(G_{q^{\nu_{q_i}(n)}_i}\right)-1.$$
\end{lem}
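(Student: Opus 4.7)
The plan is to combine the disjoint subsequences produced by Corollary \ref{lego corollary} into a single sequence over the subgroup $H = G_{q_i^{\nu_{q_i}(n)}}$, and then show that this auxiliary sequence can contain no tiny zero-sum subsequence. Applying the definition of $\mathsf{t}(H)$ to this sequence will immediately give the claimed bound.

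More precisely, for each $d \in \mathcal{A}_i$, Corollary \ref{lego corollary} furnishes $k_d$ pairwise disjoint non-empty subsequences $S'_{d,1},\dots,S'_{d,k_d}$ of $S_d$ whose sums $\sigma(S'_{d,j})$ lie in $H$ and satisfy $\mathsf{k}(S'_{d,j}) \leq 1/\text{ord}(\sigma(S'_{d,j}))$. Since subsequences belonging to distinct $S_d$ are automatically disjoint (elements of different orders), the whole collection $\{S'_{d,j}\}_{d \in \mathcal{A}_i,\, 1 \leq j \leq k_d}$ consists of pairwise disjoint subsequences of $S$. I then form the sequence $T$ over $H$ by gathering the $\sum_{d \in \mathcal{A}_i} k_d$ elements $\sigma(S'_{d,j})$. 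By definition, $|T| = \sum_{d \in \mathcal{A}_i} k_d$.

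The heart of the argument is to show that $T$ contains no tiny zero-sum subsequence. Suppose, for the sake of contradiction, that $T'$ is a non-empty zero-sum subsequence of $T$ with $\mathsf{k}(T') \leq 1$. Lift $T'$ back to $S$ by replacing each selected element $\sigma(S'_{d,j})$ by the full subsequence $S'_{d,j}$; thanks to disjointness, this produces a genuine non-empty subsequence $S''$ of $S$. Its total sum equals $\sigma(T') = 0$, so $S''$ is a zero-sum subsequence of $S$. Furthermore, using the cross-number bound from Corollary \ref{lego corollary},
\[
\mathsf{k}(S'') = \sum_{\sigma(S'_{d,j}) \in T'} \mathsf{k}(S'_{d,j}) \leq \sum_{\sigma(S'_{d,j}) \in T'} \frac{1}{\text{ord}(\sigma(S'_{d,j}))} = \mathsf{k}(T') \leq 1,
\]
contradicting the assumption that $S$ contains no tiny zero-sum subsequence.

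Hence $T$ itself contains no tiny zero-sum subsequence, and since $T$ is a sequence over $H = G_{q_i^{\nu_{q_i}(n)}}$, the very definition of the invariant $\mathsf{t}$ forces $|T| \leq \mathsf{t}(H) - 1$, which is the desired inequality. The only subtle point is verifying that the cross-number estimate provided by Corollary \ref{lego corollary}, phrased in terms of $1/\text{ord}(\sigma(S'_{d,j}))$ rather than $1/q_i^{\nu_{q_i}(d)}$, is exactly the form needed to match the weights in $\mathsf{k}(T')$; this is what makes the lifting argument tight, and is the main obstacle one has to be careful with in the write-up.
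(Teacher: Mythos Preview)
Your proof is correct and follows essentially the same approach as the paper: form the auxiliary sequence $T$ of sums $\sigma(S'_{d,j})$ over $H=G_{q_i^{\nu_{q_i}(n)}}$, lift any tiny zero-sum subsequence of $T$ back to a tiny zero-sum subsequence of $S$ via the cross-number estimate from Corollary~\ref{lego corollary}, and conclude $|T|\le \mathsf{t}(H)-1$. The only cosmetic difference is that the paper argues by contradiction (assuming $\sum_{d\in\mathcal{A}_i}k_d\ge \mathsf{t}(H)$ and producing a tiny zero-sum in $S$), whereas you phrase it directly; your explicit remark that subsequences coming from distinct $S_d$ are automatically disjoint is a point the paper leaves implicit.
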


\begin{proof} 
Let us set $t=\mathsf{t}\left(G_{q^{\nu_{q_i}(n)}_i}\right)$, and assume that
$$\displaystyle\sum_{d \in \mathcal{A}_i } k_d \geq t.$$
Then, it follows from Corollary \ref{lego corollary} that $S$ contains $t$
disjoint non-empty subsequences $S'_1,\dots,S'_t$ such that, for all $j \in \llbracket 1,t \rrbracket$,
$$\sigma(S'_j) \in G_{q^{\nu_{q_i}(n)}_i} \text{ and } \mathsf{k}(S'_j) \leq \frac{1}{\text{ord}\left(\sigma\left(S'_j\right)\right)}.$$
Now, let us consider the sequence $T$ over $G_{q^{\nu_{q_i}(n)}_i}$ defined by
$$T=\prod^t_{j=1} \sigma(S'_j).$$
Since $T$ is a sequence of length $|T| = t$, then it contains a tiny zero-sum subsequence $T' \mid T$.
In other words, there exists a non-empty subset $J \subseteq \llbracket 1,t \rrbracket$ such that
$$T'=\prod_{j \in J} \sigma(S'_j).$$

\medskip
Now, let us set
$$S'=\prod_{j \in J}S'_j.$$ 
Since $\sigma(S')=\sigma(T')=0$, then $S'$ is a non-empty zero-sum subsequence of $S$. 
In addition, we have the following chain of inequalities:
\begin{eqnarray*}
\mathsf{k}(S') &    =   & \displaystyle\sum_{j \in J} \mathsf{k}(S'_j)\\
                         &   \leq   & \displaystyle\sum_{j \in J} \frac{1}{\text{ord}\left(\sigma\left(S'_j\right)\right)}\\ 
                         &   =    & \mathsf{k}(T')\\ 
                         & \leq & 1.
\end{eqnarray*} 
Therefore, $S$ contains a tiny zero-sum subsequence, and the proof is complete.
\end{proof}

We can now prove the main theorem of this paper.

\begin{proof}
[Proof of Theorem \ref{main result}] 
Let $S$ be a  sequence over $G$ containing no tiny zero-sum subsequence. For every $i \in \llbracket 1,\ell \rrbracket$, Corollary \ref{t(G) in the p-group case} and Lemma \ref{legobis} yield
$$\displaystyle\sum_{d \in \mathcal{A}_i } k_d \leq \mathsf{t}\left(G_{q^{\nu_{q_i}(n)}_i}\right)-1 \leq c_r\left(q^{\nu_{q_i}(n)}_i-1\right),$$
so that, setting $\Delta_0=1$, one obtains
\begin{eqnarray*}
\displaystyle\sum_{d \in \mathcal{A}_i } \left(|S_d| -\left( \eta_{\left(\frac{d}{q^{\nu_{q_i}(d)}_i},d\right)}(G)-1\right)\right) &   \leq   &  \displaystyle\sum_{d \in \mathcal{A}_i }  k_d\frac{d}{q^{\nu_{q_i}(d)}_i}\\
                                                                                                                                                         &  \leq   & \Delta_{i-1} \displaystyle\sum_{d \in \mathcal{A}_i }  k_d\\
                                                                                                                                                         &  \leq   & c_r\Delta_{i-1}\left(q^{\nu_{q_i}(n)}_i-1\right)\\
                                                                                                                                                         &    =     & c_r\left(\Delta_i-\Delta_{i-1}\right).
\end{eqnarray*}
Now, using Proposition \ref{estimate of generalized eta numbers}, we have
\begin{eqnarray*}
|S| & = & \displaystyle\sum^\ell_{i=1} \displaystyle\sum_{d \in \mathcal{A}_i} |S_d|\\
      & = & \displaystyle\sum^\ell_{i=1}\displaystyle\sum_{d \in \mathcal{A}_i } \left( \eta_{\left(\frac{d}{q^{\nu_{q_i}(d)}_i},d\right)}(G) -1 \right) + \displaystyle\sum^\ell_{i=1}\displaystyle\sum_{d \in \mathcal{A}_i } \left(|S_d| - 
      \left(\eta_{\left(\frac{d}{q^{\nu_{q_i}(d)}_i},d\right)}(G) - 1 \right) \right)\\
          & \leq & c_r \displaystyle\sum^\ell_{i=1}\displaystyle\sum_{d \in \mathcal{A}_i } \left(\frac{d}{q^{\nu_{q_i}(d)}_i} - 1\right) + c_r \displaystyle\sum^\ell_{i=1}\left(\Delta_i-\Delta_{i-1}\right)\\
      &   =   & c_r\displaystyle\sum_{d \mid n} \left(\frac{d}{P^{+}(d)^{\nu_{P^+(d)}(d)}}-1\right) + c_r\left(n-1\right),
\end{eqnarray*}
which completes the proof of the theorem.
\end{proof}

\section{A special sum of divisors}
\label{Section modified sum of divisors}
In this section, we derive Theorem \ref{t(G) est lineaire en n} from the study of a particular arithmetic function. 
We recall $\mathcal{P}=\{p_1=2 < p_2=3 < \cdots\}$ denotes the set of prime numbers, and start by proving the following easy lemma.
\begin{lem} \label{primes} For every integer $\ell \geq 1$, one has
$$\displaystyle\prod^{\ell}_{i=1} \left(1 + \frac{1}{p_i-1} \right) \leq p_{\ell+1}-1.$$
\end{lem}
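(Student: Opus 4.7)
The plan is to rewrite each factor in the product in a form that will telescope under induction. Observe that
$$1 + \frac{1}{p_i - 1} = \frac{p_i}{p_i - 1},$$
so the inequality to prove is equivalent to
$$\prod_{i=1}^{\ell} \frac{p_i}{p_i - 1} \leq p_{\ell+1} - 1.$$

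I would then proceed by induction on $\ell$. The base case $\ell = 1$ reduces to $\tfrac{2}{1} \leq p_2 - 1 = 2$, which holds with equality. For the inductive step, assume the bound for some $\ell \geq 1$. Multiplying the induction hypothesis by $\tfrac{p_{\ell+1}}{p_{\ell+1} - 1}$ gives
$$\prod_{i=1}^{\ell+1} \frac{p_i}{p_i - 1} \leq (p_{\ell+1} - 1) \cdot \frac{p_{\ell+1}}{p_{\ell+1} - 1} = p_{\ell+1}.$$
So it suffices to show $p_{\ell+1} \leq p_{\ell+2} - 1$, and this is immediate because $p_{\ell+1}$ and $p_{\ell+2}$ are distinct integers with $p_{\ell+1} < p_{\ell+2}$.

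There is no serious obstacle here: the factor $p_{\ell+1} - 1$ from the induction hypothesis is exactly what is needed to cancel the denominator in the newly introduced factor $\tfrac{p_{\ell+1}}{p_{\ell+1} - 1}$, so the whole argument reduces to the trivial observation that consecutive primes differ by at least one.
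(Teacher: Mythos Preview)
Your proof is correct and cleaner than the paper's. The paper does not exploit the telescoping directly; instead it first proves the auxiliary inequality
\[
\prod_{i=2}^{\ell}\left(1+\frac{1}{2(i-1)}\right)\leq \ell
\]
by induction, and then combines it with the crude bound $p_i \geq 2i-1$ (for $i\geq 2$) to obtain
\[
\prod_{i=1}^{\ell}\left(1+\frac{1}{p_i-1}\right)\leq 2\prod_{i=2}^{\ell}\left(1+\frac{1}{2(i-1)}\right)\leq 2\ell \leq p_{\ell+1}-1.
\]
Your observation that the factor $p_{\ell+1}-1$ from the induction hypothesis cancels the denominator of the new factor $\tfrac{p_{\ell+1}}{p_{\ell+1}-1}$ sidesteps all of this: you need only that consecutive primes differ by at least one, whereas the paper effectively needs the stronger fact $p_{\ell+1}\geq 2\ell+1$. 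The paper's route does yield the intermediate estimate $\prod_{i=1}^{\ell}\tfrac{p_i}{p_i-1}\leq 2\ell$, but since that is not used elsewhere, your argument is strictly preferable here.
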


\begin{proof} To start with, let us prove the following statement by induction on $\ell \geq 1$.
$$\displaystyle\prod^{\ell}_{i=2} \left(1 + \frac{1}{2(i-1)} \right) \leq \ell.$$
One can readily notice that this statement is true for $\ell=1$ and $\ell=2$. 
Assume now that the statement holds for some $\ell \geq 2$. 
Then, let us show that it holds for $\ell+1$ also.
Indeed, 
\begin{eqnarray*}
\displaystyle\prod^{\ell+1}_{i=2} \left(1 + \frac{1}{2(i-1)} \right) & = & \displaystyle\prod^{\ell}_{i=2} \left(1 + \frac{1}{2(i-1)} \right) \left(1 + \frac{1}{2\ell} \right)\\
                                                                                                            & \leq  & \ell\left(1 + \frac{1}{2\ell} \right)\\
                                                                                                            & \leq & \ell+1,
\end{eqnarray*}
and we are done. The desired result now follows from the following chain of inequalities, using the trivial bound $p_\ell \geq 2\ell-1$, for all $\ell \geq 2$, in the following fashion.
\begin{eqnarray*}
\displaystyle\prod^{\ell}_{i=1} \left(1 + \frac{1}{p_i-1} \right)  & = & 2\displaystyle\prod^{\ell}_{i=2} \left(1 + \frac{1}{p_i-1} \right)\\
                                                                                                     & \leq & 2\displaystyle\prod^{\ell}_{i=2} \left(1 + \frac{1}{2(i-1)} \right)\\
                                                                                                     & \leq  & 2\ell\\
                                                                                                     & = & (2(\ell+1)-1)-1\\
                                                                                                     & \leq & p_{\ell+1}-1.
\end{eqnarray*}
\end{proof}

We can now prove the main result of this section. 
\begin{proposition} 
\label{modified sum of divisors} 
Let $f(n)=\displaystyle\sum_{d \mid n} \frac{d}{P^{+}(d)}$. For every integer $n \geq 1$, one has $f(n) \leq n$.
\end{proposition}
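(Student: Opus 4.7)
The plan is to proceed by induction on $\omega(n)$, the number of distinct prime factors of $n$. The base case $\omega(n)=0$, i.e., $n=1$, is immediate since $f(1)=1/P^+(1)=1$.

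For the inductive step, I would write $n = m q^\alpha$ where $q = P^+(n)$, $\alpha = \nu_q(n) \geq 1$, and $\gcd(m,q)=1$. Every divisor $d$ of $n$ decomposes uniquely as $d = d_1 q^\beta$ with $d_1 \mid m$ and $0 \leq \beta \leq \alpha$, and since $q$ strictly exceeds every prime factor of $m$, one has $P^+(d_1 q^\beta)=q$ as soon as $\beta \geq 1$, while $P^+(d_1)$ is unaffected when $\beta=0$. Splitting the sum defining $f(n)$ according to this dichotomy and recognising $\sum_{d_1 \mid m} d_1 = \sigma(m)$, where $\sigma$ denotes the usual sum-of-divisors function, I obtain the clean recursion
\begin{equation*}
f(n) = f(m) + \sigma(m)\cdot\frac{q^\alpha-1}{q-1}.
\end{equation*}
Combined with the inductive hypothesis $f(m) \leq m$, this reduces the target inequality $f(n) \leq n = m q^\alpha$ to the purely arithmetic inequality $\sigma(m) \leq m(q-1)$.

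To establish the latter, I would bound each local factor of $\sigma(m)/m$ by $p/(p-1)$, yielding the standard estimate $\sigma(m)/m \leq \prod_{p \mid m}\bigl(1+1/(p-1)\bigr)$. Since every prime divisor of $m$ is strictly less than $q$, such primes lie among $p_1, \dots, p_\ell$ for some index $\ell$ with $p_{\ell+1} \leq q$, and Lemma \ref{primes} gives
\begin{equation*}
\frac{\sigma(m)}{m} \leq \prod_{i=1}^{\ell}\left(1+\frac{1}{p_i-1}\right) \leq p_{\ell+1}-1 \leq q-1,
\end{equation*}
which closes the induction.

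I do not anticipate a serious obstacle. The only non-routine step is extracting the recursion for $f$ by peeling off the largest prime factor of $n$; once that identity is in place, Lemma \ref{primes} has evidently been tailored so that the residual inequality $\sigma(m) \leq m(q-1)$ falls out immediately.
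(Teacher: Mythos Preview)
Your proof is correct and follows essentially the same route as the paper: both argue by induction on $\omega(n)$, peel off the largest prime power $q^\alpha$ to obtain the recursion $f(n)=f(m)+\sigma(m)\,\dfrac{q^\alpha-1}{q-1}$, and then close the induction via the bound $\sigma(m)/m \leq q-1$ supplied by Lemma~\ref{primes}. The only cosmetic difference is that you start the induction at $n=1$ whereas the paper starts at prime powers; your reduction of the target inequality to $\sigma(m)\leq m(q-1)$ is exactly equivalent to the paper's computation of $f(n)/n$.
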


\begin{proof}
Writing $n=q^{\alpha_1}_1 \cdots q^{\alpha_\ell}_\ell$, where $q_1 < \cdots < q_\ell$, we prove the desired result by induction on $\ell = \omega(n) \geq 1$.

\medskip
If $\ell=1$, then one has
\begin{eqnarray*}
f(q^{\alpha_1}_1) & = & \displaystyle\sum_{d \mid q^{\alpha_1} _1}\frac{d}{P^{+}(d)}\\ 
                                & = & 1 + \displaystyle\sum^{\alpha_1}_{i =1} q^{i-1}_1\\
                                & = & 1 + \left(\frac{q^{\alpha_1}_1-1}{q_1-1}\right),
\end{eqnarray*}
so that we now have
\begin{eqnarray*}
\frac{f(q^{\alpha_1}_1)}{q^{\alpha_1}_1} & =  & \frac{1}{q^{\alpha_1}_1} + \frac{1}{q_1-1}\left(\frac{q^{\alpha_1}_1-1}{q^{\alpha_1}_1}\right)\\
                                                                         & \leq  & \frac{1}{q^{\alpha_1}_1} + \left(\frac{q^{\alpha_1}_1-1}{q^{\alpha_1}_1}\right)\\
                                                                         &    =   & 1,
\end{eqnarray*}
and we are done.

\medskip
Assume now that the statement holds true for some $\ell \geq 1$.
Then, setting $\sigma(n)=\sum_{d \mid n} d$, we obtain the following equalities.
\begin{eqnarray*}
f(q^{\alpha_1}_1 \cdots q^{\alpha_{\ell+1}}_{\ell+1}) & = & \displaystyle\sum_{d \mid q^{\alpha_1} _1 \cdots q^{\alpha_{\ell+1}}_{\ell+1}}  \frac{d}{P^{+}(d)}\\ 
                                & = & \displaystyle\sum_{d \mid q^{\alpha_1}_1 \cdots q^{\alpha_\ell} _\ell} \frac{d}{P^{+}(d)} 
                                + \displaystyle\sum^{\alpha_{\ell+1}}_{i=1} \displaystyle\sum_{d \mid q^{\alpha_1}_1 \cdots q^{\alpha_\ell} _\ell}  \frac{d q^{i}_{\ell+1}}{q_{\ell+1}}\\ 
                                & = & f(q^{\alpha_1}_1 \cdots q^{\alpha_\ell} _\ell) + \sigma(q^{\alpha_1}_1 \cdots q^{\alpha_\ell} _\ell) \displaystyle\sum^{\alpha_{\ell+1}}_{i=1}  q^{i-1}_{\ell+1}\\
                                & = & f(q^{\alpha_1}_1 \cdots q^{\alpha_\ell} _\ell) + \sigma(q^{\alpha_1}_1 \cdots q^{\alpha_\ell} _\ell) \left(\frac{q^{\alpha_{\ell+1}}_{\ell+1}-1}{q_{\ell+1}-1}\right)\\                                                              
\end{eqnarray*}
so that 
\begin{eqnarray*}
\frac{f(q^{\alpha_1}_1 \cdots q^{\alpha_{\ell+1}}_{\ell+1})}{q^{\alpha_1}_1 \cdots q^{\alpha_{\ell+1}}_{\ell+1}}  & = & \frac{f(q^{\alpha_1}_1 \cdots q^{\alpha_\ell} _\ell)}{q^{\alpha_1}_1 \cdots q^{\alpha_\ell}_\ell} \frac{1}{q^{\alpha_{\ell+1}}_{\ell+1}} + \frac{\sigma(q^{\alpha_1}_1 \cdots q^{\alpha_\ell} _\ell)}{q^{\alpha_1}_1 \cdots q^{\alpha_\ell}_\ell} \frac{1}{\left(q_{\ell+1}-1\right)} \frac{q^{\alpha_{\ell+1}}_{\ell+1}-1}{q^{\alpha_{\ell+1}}_{\ell+1}}.                                                                       
\end{eqnarray*}
First, by the induction hypothesis, we have
$$\frac{f(q^{\alpha_1}_1 \cdots q^{\alpha_\ell} _\ell)}{q^{\alpha_1}_1 \cdots q^{\alpha_\ell}_\ell} \leq 1.$$
Second, since $\sigma(n)$ is multiplicative, Lemma \ref{primes} yields
\begin{eqnarray*}
\frac{\sigma(q^{\alpha_1}_1 \cdots q^{\alpha_\ell} _\ell)}{q^{\alpha_1}_1 \cdots q^{\alpha_\ell}_\ell} & = & \frac{\sigma(q^{\alpha_1}_1)}{q^{\alpha_1}_1}  \cdots \frac{\sigma(q^{\alpha_\ell} _\ell)}{q^{\alpha_\ell}_\ell}\\ 
                                                                                                                                                            & \leq & \displaystyle\prod^{\ell}_{i=1} \left(1 + \frac{1}{q_i-1} \right)\\ 
                                                                                                                                                            & \leq & \displaystyle\prod^{\ell}_{i=1} \left(1 + \frac{1}{p_i-1} \right)\\ 
                                                                                                                                                            & \leq & p_{\ell+1}-1\\
                                                                                                                                                            & \leq & q_{\ell+1}-1. 
\end{eqnarray*}
Thus, we obtain
$$\frac{f(q^{\alpha_1}_1 \cdots q^{\alpha_{\ell+1}}_{\ell+1})}{q^{\alpha_1}_1 \cdots q^{\alpha_{\ell+1}}_{\ell+1}} \leq \frac{1}{q^{\alpha_{\ell+1}}_{\ell+1}} + \frac{q^{\alpha_{\ell+1}}_{\ell+1}-1}{q^{\alpha_{\ell+1}}_{\ell+1}} = 1,$$
which completes the proof.                                                                                                                                                                                        
\end{proof}

As an immediate corollary of Proposition \ref{modified sum of divisors}, we now prove Theorem \ref{t(G) est lineaire en n}.

\begin{proof}
[Proof of Theorem \ref{t(G) est lineaire en n}]
Let $G$ be a finite Abelian group of rank $r$ and exponent $n$. Then, by Theorem \ref{main result} and Proposition \ref{modified sum of divisors}, one has 
\begin{eqnarray*}
\mathsf{t}(G) & \leq & c_r\displaystyle\sum_{d \mid n} \left(\frac{d}{P^{+}(d)^{\nu_{P^+(d)}(d)}}-1\right) + c_r\left(n-1\right) + 1\\ 
                       & \leq & c_r\displaystyle\sum_{d \mid n} \left(\frac{d}{P^{+}(d)}-1\right) + c_r\left(n-1\right) + 1\\  
                       &   \leq  & 2c_r\left(n - 1\right) +1.
\end{eqnarray*}
\end{proof}

\section{A few concluding remarks}
\label{Section concluding remarks}
As previously stated, the exact value of $\mathsf{t}(G)$ is currently known for cyclic groups and elementary $2$-groups only. 
In this context, the special case of finite Abelian groups of rank two is of particular interest, and the following conjecture appears to be inviting. 

\begin{conjecture}
\label{conjecture on the rank two case}
For all integers $m,n \in \mathbb{N}^*$ such that $m \mid n$, one has
$$\mathsf{t}(C_m \oplus C_n) = 2m + n - 2.$$
\end{conjecture}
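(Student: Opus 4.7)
The lower bound $\mathsf{t}(C_m\oplus C_n)\ge 2m+n-2$ is immediate from $\eta(G)\le \mathsf{t}(G)$ combined with (\ref{eta rank two}), so the substance of the conjecture is the matching upper bound. My plan is to proceed by induction on $\omega(n)$, with the goal of re-running the argument of Section \ref{Section main theorem} fed by a sharpened prime-power input.

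For the base case $n=p^\alpha$ and $m=p^a$ with $a\le\alpha$, the diagonal instance $a=\alpha$ is precisely (\ref{t rank two}), so the new content lies in the range $a<\alpha$. Here I would specialize Theorem \ref{Profils} to $G=C_{p^a}\oplus C_{p^\alpha}$ and exploit Proposition \ref{propmarrantegenerale}: for a divisor $d=p^k$ with $k>a$ and $d'=p$, the subgroup $G_{\upsilon(d',d)}$ degenerates to a cyclic factor $C_p$, which turns the generic bound $\eta_{(p,d)}(G)\le 3p-2$ into the much tighter $\eta_{(p,d)}(G)=p$. Feeding these stratified estimates into the polytope $\mathbb{P}_G$ of Theorem \ref{Profils} yields a strictly better bound on $\rho(G)$ than the one underlying Corollary \ref{t(G) in the p-group case}, and a computation shows this saves enough to close roughly half of the gap between the $3p^\alpha-2$ coming from the symmetric case and the target $2p^a+p^\alpha-2$.

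For the inductive step, assume the conjecture holds whenever $\omega(n')<\omega(n)$, and write $n=q^\beta n'$ with $\gcd(q,n')=1$ and $m=q^b m'$ accordingly. I would revisit the proof of Theorem \ref{main result}, replacing Corollary \ref{t(G) in the p-group case} by the base-case identity $\mathsf{t}(G_{q^{\nu_q(n)}})=2q^b+q^\beta-2$ for the prime $q$ under consideration, and by the inductive hypothesis $\mathsf{t}(C_{m'}\oplus C_{n'})=2m'+n'-2$ for the complementary part that emerges after one step of the extraction machinery of Lemma \ref{legobis}. The sum-over-$\mathcal{A}_i$ telescoping at the end of the proof of Theorem \ref{main result} should then, after substituting these improved constants, collapse to the desired $2m+n-2$; this requires keeping careful track of the multiplicative contributions of the $q^\beta$ block and the $n'$ block under the Chinese Remainder decomposition.

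The main obstacle is unambiguously the base case. Even after the stratified refinement of $\mathbb{P}_G$ described above, the blunt cross-number bookkeeping $|S|\le p^\alpha\mathsf{k}(S)$ still overshoots the target by a positive quantity proportional to $(p^a-1)(p^{\alpha-a}-1)$; this reflects a genuine loss that $\mathsf{k}$ alone cannot detect, since the cross-number treats all elements of the same order symmetrically while the two Sylow factors $C_{p^a}$ and $C_{p^\alpha}$ are asymmetric for $a<\alpha$. Bridging this gap seems to call either for a two-weight refinement of $\mathsf{k}$ that records each Sylow coordinate separately, or for a direct structural analysis of extremal sequences for $\eta(C_{p^a}\oplus C_{p^\alpha})$ in the spirit of the inductive method in zero-sum theory. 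Once this obstruction is overcome, the induction on $\omega(n)$ should amount to a careful repackaging of the Section \ref{Section main theorem} framework.
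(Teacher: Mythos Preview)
The statement you address is labeled a \emph{conjecture} in the paper, not a theorem: the paper offers no proof of the upper bound. It records only the lower bound (from (\ref{eta rank two}) together with $\eta(G)\le\mathsf{t}(G)$) and the diagonal prime-power instance (\ref{t rank two}), and then explicitly leaves the remaining cases open. There is therefore no proof in the paper against which your proposal can be compared.

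As for the proposal itself, it is a strategy with a self-acknowledged gap rather than a proof. The base case $C_{p^a}\oplus C_{p^\alpha}$ with $a<\alpha$ does not close: the passage $|S|\le p^\alpha\,\mathsf{k}(S)$ from $\rho(G)$ to $\mathsf{t}(G)$ loses a term of order $(p^a-1)(p^{\alpha-a}-1)$, and refining the polytope $\mathbb{P}_G$ via Proposition \ref{propmarrantegenerale} cannot recover it, since the cross number is blind to which cyclic factor an element of a given order sits in. This is a genuine obstruction, as you yourself concede. The inductive step is likewise only heuristic: substituting the exact base-case value for $\mathsf{t}(G_{q^{\nu_q(n)}})$ in the proof of Theorem \ref{main result} tightens only the second of the two sums appearing there; the first sum still carries the slack coming from the $c_r$-bound of Proposition \ref{estimate of generalized eta numbers} on the numbers $\eta_{(d',d)}(G)$, and you have not verified that replacing those by their exact values through Proposition \ref{propmarrantegenerale} makes the total telescope to $2m+n-2$. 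In short, what you have written is a reasonable plan of attack on an open problem, not a proof, and the paper claims none.
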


If true, the statement of Conjecture \ref{conjecture on the rank two case} would nicely extend the theorem of Kleitman and Lemke. 
In view of equality (\ref{t rank two}), this conjecture readily holds true for all groups $G \simeq C_{p^\alpha} \oplus C_{p^\alpha}$,
where $p \in \mathcal{P}$ and $\alpha \in \mathbb{N}^*$. 
Moreover, note that $\mathsf{t}(C_m \oplus C_n) \geq 2m + n - 2$ easily follows from (\ref{eta rank two}). 

\medskip 
Even though far less is known on the exact value of $\eta(G)$ for finite Abelian groups of higher rank, it would be worth knowing how close it actually is to $\mathsf{t}(G)$ in the general case.
A first step in this direction is the following lemma, which gives a simple property shared by all finite Abelian groups for which $\mathsf{t}(G)=\eta(G)$ holds.

\begin{lem}
\label{eta of a subgroup}
Let $G$ be a finite Abelian group such that $\mathsf{t}(G)=\eta(G)$. 
Then for every subgroup $H$ of $G$, one has $\eta(H) \leq \eta(G)$.
\end{lem}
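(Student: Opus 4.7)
The plan is to exploit the standard inequality $|S'| \leq \exp(H) \cdot \mathsf{k}(S')$ which holds for any sequence $S'$ over $H$, combined with the assumed equality $\mathsf{t}(G) = \eta(G)$.

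First I would take an arbitrary sequence $S$ over $H$ of length $|S| \geq \eta(G)$; the goal is to produce inside it a short zero-sum subsequence relative to $H$, i.e.\ a non-empty zero-sum $S' \mid S$ with $|S'| \leq \exp(H)$. Since $H$ is a subgroup of $G$, the sequence $S$ may equally be viewed as a sequence over $G$, and its length is at least $\eta(G) = \mathsf{t}(G)$ by hypothesis. Therefore $S$ contains a tiny zero-sum subsequence $S'$, meaning that $\sigma(S') = 0$ and $\mathsf{k}(S') \leq 1$.

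The key observation is then that every element $g$ of $S'$ belongs to $H$, so $\text{ord}(g)$ divides $\exp(H)$ and in particular $\text{ord}(g) \leq \exp(H)$. Consequently
$$\frac{|S'|}{\exp(H)} \;\leq\; \displaystyle\sum_{g \in S'} \frac{1}{\text{ord}(g)} \;=\; \mathsf{k}(S') \;\leq\; 1,$$
which yields $|S'| \leq \exp(H)$. Hence $S'$ is a non-empty zero-sum subsequence of $S$ of length at most $\exp(H)$, proving that any sequence over $H$ of length at least $\eta(G)$ contains a short zero-sum subsequence, and therefore $\eta(H) \leq \eta(G)$.

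I do not anticipate a real obstacle here: the only subtle point is to make sure one reads the tiny-zero-sum property with respect to the orders computed in $G$, and then converts $\mathsf{k}(S') \leq 1$ into the desired length bound using the sharper inequality $\text{ord}(g) \leq \exp(H)$, which is available precisely because the elements of $S'$ live in $H$ rather than merely in $G$.
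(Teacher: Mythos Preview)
Your proof is correct and is essentially the paper's argument unpacked: the paper simply invokes the chain $\eta(H) \leq \mathsf{t}(H) \leq \mathsf{t}(G) = \eta(G)$, where the first inequality is exactly your observation that $\mathsf{k}(S') \leq 1$ forces $|S'| \leq \exp(H)$, and the second is the fact that a sequence over $H$ is a sequence over $G$ with identical element orders. You have merely fused these two steps into a single direct argument.
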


\begin{proof}
Let $G$ be as in the statement of the lemma, and let $H$ be a subgroup of $G$. 
By definition, the inequality $\mathsf{t}(H) \leq \mathsf{t}(G)$ holds. 
Therefore,
$$\eta(H) \leq \mathsf{t}(H) \leq \mathsf{t}(G) = \eta(G),$$
and the required result is proved.
\end{proof}

However, it turns out that the above property is restrictive enough to guarantee that for every $r \geq 4$, there exists a finite Abelian group $G$ of rank $r$ for which $\mathsf{t}(G) > \eta(G)$.
The proof of this fact actually relies on the following key invariant in zero-sum combinatorics.
Given a finite Abelian group $G$, let $\mathsf{D}(G)$ denote the smallest integer $t \in \mathbb{N}^*$ such that every sequence $S$ over $G$ of length $|S| \geq t$ contains a non-empty zero-sum subsequence. 
The number $\mathsf{D}(G)$ is called the \em Davenport constant \em of $G$, and we refer to \cite{GeroKoch05,GeroRuzsa09} for background and connections with algebraic number theory.

\medskip
In what follows, we will need a classical theorem, independently proved in the late sixties by Kruyswijk \cite{EmdeBoas69} and Olson \cite{Olso69a}, stating that
\begin{equation}
\label{Davenport}
\mathsf{D}(C_{p^{\alpha_1}} \oplus \cdots \oplus C_{p^{\alpha_r}})=\displaystyle\sum^r_{i=1} \left(p^{\alpha_i}-1\right)+1
\end{equation}
for all primes $p$ and positive integers $\alpha_1, \dots, \alpha_r$. 
Our result now is the following.

\begin{proposition}
For every integer $r \geq 4$, there exists a finite Abelian group $G$ of rank $r$ for which 
$\mathsf{t}(G) > \eta(G)$.
\end{proposition}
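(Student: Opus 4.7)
The plan is to argue via the contrapositive of Lemma~\ref{eta of a subgroup}: combined with the general inequality $\eta(G) \leq \mathsf{t}(G)$, it suffices to exhibit, for each $r \geq 4$, a finite Abelian group $G$ of rank $r$ together with a subgroup $H \leq G$ satisfying $\eta(H) > \eta(G)$. Such non-monotonicity of $\eta$ is only possible when $\exp(H)<\exp(G)$, because otherwise every short zero-sum of $G$ supported in $H$ is automatically short in $H$ as well, forcing $\eta(H) \leq \eta(G)$ by a direct comparison of the defining thresholds.

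The source of large $\eta(H)$ will be an elementary abelian $2$-group of full rank inside $G$: take $H = C_2^r$, for which the explicit value $\eta(C_2^r)=2^r$ recorded in the excerpt grows geometrically in $r$. One then searches for a rank-$r$ group $G$ having $H$ as its $2$-torsion subgroup---equivalently, all invariant factors of $G$ are even---and satisfying $\eta(G)<2^r$. A natural candidate is $G = C_2^{r-1}\oplus C_{2q}\cong C_2^r\oplus C_q$ for a small odd prime $q$, which has rank $r$ and exponent $2q$; any such $q$ gives the required inclusion $H\subseteq G$ through the $2$-torsion of $C_{2q}$.

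The main obstacle is the upper bound $\eta(G) < 2^r$, since the Alon--Dubiner estimate $\eta(G)\leq c_r(2q-1)+1$ from Theorem~\ref{Alon c_r} is far too weak (we only know $c_r\geq 2^r-1$). Instead, one should exploit the coprime decomposition $G \cong C_2^r \oplus C_q$ and combine a projection argument onto the $C_q$-factor---locating disjoint subsequences of length at most $q$ summing to zero via Erd\H{o}s--Ginzburg--Ziv---with the Kruyswijk--Olson formula~(\ref{Davenport}) applied to the $C_2^r$-factor, which yields $\mathsf{D}(C_2^r)=r+1$ and, for coprime direct sums, $\mathsf{D}(G)\leq r+q$. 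A sufficiently long sequence over $G$ thus produces a small subfamily of these $C_q$-zero-sums whose images in $C_2^r$ also sum to zero, lifting to a short zero-sum of $G$.

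The delicate step is the base case $r = 4$, where $\eta(H) = 16$ leaves very narrow margin and one must verify $\eta(C_2^3 \oplus C_{2q}) \leq 15$ for some small $q$ (for example $q = 3$); this is the principal technical computation and is most cleanly done by a direct ad-hoc combinatorial argument exploiting that $\exp(G)$ is small and that the elementary-abelian part has well-understood Davenport constant. For $r \geq 5$ the gap between the geometrically growing $\eta(H) = 2^r$ and the substantially slower-growing upper bound on $\eta(G)$ becomes comfortable, so the same family of constructions extends uniformly to all $r \geq 4$.
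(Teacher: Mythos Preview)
Your high-level strategy coincides with the paper's: exhibit, via Lemma~\ref{eta of a subgroup}, a rank-$r$ group $G$ with a subgroup $H$ satisfying $\eta(H)>\eta(G)$. The divergence---and the gap---lies in your choice of $G=C_2^{r-1}\oplus C_{2q}\cong C_2^r\oplus C_q$ for an odd prime $q$, and in the upper bound you need on $\eta(G)$.

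Your sketched projection argument does not yield a \emph{short} zero-sum. Grouping a long sequence into disjoint blocks of length $\le q$ that vanish in $C_q$ and then using $\mathsf{D}(C_2^r)=r+1$ to find a zero-sum among the block-images in $C_2^r$ produces a zero-sum in $G$ of length up to $(r+1)q$, which exceeds $\exp(G)=2q$ as soon as $r\ge 2$; replacing $\mathsf{D}$ by $\eta(C_2^r)=2^r$ to keep at most two blocks makes the required number of blocks $2^r$, and the resulting bound on $\eta(G)$ is of order $2^r q$, far above the target $2^r$. (Incidentally, your side claim $\mathsf{D}(G)\le r+q$ is also off: already $\mathsf{D}^*(G)=r+2q-1$.) So the ``principal technical computation'' you defer is not a routine detail but the heart of the matter, and the tools you list do not close it. In fact, standard lower-bound constructions strongly suggest $\eta(C_2^{r-1}\oplus C_{2q})\ge 2^r$, so the inequality $\eta(G)<2^r$ you need may simply be false for these groups.

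The paper avoids this obstacle by staying inside a single prime: it takes $G=C_3^{r-1}\oplus C_{3^\alpha}$ and $H=C_3^r$, so that the exact formula~(\ref{Davenport}) for $\mathsf{D}(G)$ applies and, crucially, the Schmid--Zhuang bound $\eta(G)\le \mathsf{D}(G)+\exp(G)-1$ for $p$-groups with $\mathsf{D}(G)\le 2\exp(G)-1$ gives $\eta(G)\le 2r+2\cdot 3^\alpha-3$. Choosing $\alpha$ with $2r-1\le 3^\alpha\le 2^r-r$ then forces $\eta(G)<2^{r+1}-1\le \eta(H)$. The same scheme would work with $p=2$ and $G=C_2^{r-1}\oplus C_{2^\alpha}$; what does not work is leaving the $p$-group setting, since no analogue of the Schmid--Zhuang inequality is available there.
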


\begin{proof}
Let $r \geq 4$ be an integer. 
It is an easy exercise to prove there is an integer $\alpha \geq 2$ such that
$$\frac{\ln(2r-1)}{\ln3} \leq \alpha \leq \frac{\ln(2^r-r)}{\ln3}.$$
Now, let us consider
$$G=C^{r-1}_3 \oplus C_{3^\alpha} \quad \text{ and } \quad H=C^r_3.$$ 
Since $G$ is a finite Abelian $3$-group, it follows from (\ref{Davenport}) that
$$\mathsf{D}(G)=2(r-1)+3^\alpha \leq 2\exp(G)-1,$$
so that \cite[Theorem $1.2$]{WolfiZhuang} yields
$$\eta(G) \leq \mathsf{D}(G)+\exp(G)-1 = 2r + 2.3^\alpha - 3.$$
On the other hand, we deduce from (\ref{lower bound}) that
$$\eta(H) \geq (2^r-1)(3-1)+1 = 2^{r+1}-1.$$
Therefore, $H$ is a subgroup of $G$ such that $\eta(H) > \eta(G)$, and the desired result follows from Lemma \ref{eta of a subgroup}.
\end{proof}

It would certainly be interesting to know whether the equality $\mathsf{t}(G)=\eta(G)$ holds for all finite Abelian groups of rank three.
In another direction, we would also like to address the following conjecture. 

\begin{conjecture}
\label{conjecture in the general case}
For all integers $r,n \in \mathbb{N}^*$, one has $\mathsf{t}(C^r_n)=\eta(C^r_n)$.
\end{conjecture}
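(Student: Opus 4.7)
The plan is to proceed by induction on $\omega(n)$, the number of distinct prime divisors of $n$. The inequality $\eta(C^r_n) \leq \mathsf{t}(C^r_n)$ is automatic, since any tiny zero-sum subsequence is in particular short, so all the work lies in the reverse inequality: every sequence over $C^r_n$ of length at least $\eta(C^r_n)$ must contain a tiny zero-sum subsequence.

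For the base case $\omega(n) = 1$, we have $G \simeq C^r_{p^\alpha}$. Corollary \ref{t(G) in the p-group case} together with Theorem \ref{Alon c_r} already gives matching upper bounds $\mathsf{t}(C^r_{p^\alpha}) \leq c_r(p^\alpha - 1) + 1$ and $\eta(C^r_{p^\alpha}) \leq c_r(p^\alpha - 1) + 1$, but this is not enough: one needs to bring these two invariants into equality directly. I would attempt to refine the argument behind Theorem \ref{majoration rho(G)} in the $p$-group setting, exploiting the observation that a short zero-sum subsequence whose elements all have maximal order $p^\alpha$ is automatically tiny (its cross number is $|S'|/p^\alpha \leq 1$). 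The strategy would be to use the extraction technique from Lemma \ref{lego}, applied iteratively from smallest to largest order, to replace low-order parts by partial subsums living in the subgroup of elements of higher order, thereby converting any short zero-sum in the sequence into a tiny one.

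For the inductive step $\omega(n) \geq 2$, write $n = q_\ell^{\alpha_\ell} m$ with $\gcd(q_\ell^{\alpha_\ell}, m) = 1$. Mimicking the partitioning argument in the proof of Theorem \ref{main result}, one would extract from each stratum $S_d$ of a sufficiently long sequence $S$ over $C^r_n$ a collection of disjoint subsequences with sum in a chosen subgroup, as in Lemma \ref{lego} and Corollary \ref{lego corollary}, but now plugging in the sharp equality $\mathsf{t}(C^r_{q_i^{\nu_{q_i}(n)}}) = \eta(C^r_{q_i^{\nu_{q_i}(n)}})$ provided by the base case in place of the cruder bound from Corollary \ref{t(G) in the p-group case}. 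The hope is that the resulting count matches $\eta(C^r_n)$ exactly, with no parasitic divisor-sum correction of the type that appears in Theorem \ref{main result}.

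The main obstacle is the base case. Establishing $\mathsf{t}(C^r_{p^\alpha}) = \eta(C^r_{p^\alpha})$ appears to require either an exact formula for $\eta(C^r_{p^\alpha})$ (notoriously open for $r \geq 3$) or a new direct comparison between the two invariants that bypasses computing either of them individually. A second, subtler obstacle in the inductive step is that the direct-sum decomposition $C^r_n \cong C^r_{q_\ell^{\alpha_\ell}} \oplus C^r_m$ does not behave additively with respect to $\eta$: the naive identity $\eta(C^r_n) = \eta(C^r_{q_\ell^{\alpha_\ell}}) + \eta(C^r_m) - 1$ already fails in rank one (where $\eta(C_{pq}) = pq \neq p + q - 1$), so the induction cannot be closed by a simple sum of contributions and will need a finer identity relating $\eta(C^r_n)$ to its $p$-primary constituents.
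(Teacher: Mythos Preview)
This statement is a \emph{conjecture} in the paper, not a theorem: the paper does not prove it. The only remarks the paper makes are that the conjecture holds trivially for elementary $p$-groups (where every non-zero element has the same order, so tiny and short coincide) and that Corollary~\ref{t(G) in the p-group case} gives a ``nearly optimal'' bound when $n$ is a prime power. There is no proof to compare your proposal against.

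Your proposal is not a proof either, and you already recognise this. The base case $\mathsf{t}(C^r_{p^\alpha})=\eta(C^r_{p^\alpha})$ is the heart of the matter, and your suggested strategy --- use the extraction technique of Lemma~\ref{lego} to push low-order elements up to maximal order --- does not close the gap: that technique produces subsequences with sums in a proper subgroup and controlled cross number, but it gives no handle on how many such extractions are needed before one is forced into a zero-sum, which is exactly what $\eta$ measures. Without knowing $\eta(C^r_{p^\alpha})$ exactly (open for $r\geq 3$), or having a structural argument that bypasses both values, the base case remains genuinely open.

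The second obstacle you identify is equally real. The partition argument behind Theorem~\ref{main result} is inherently lossy: the divisor-sum correction term is not an artefact of bookkeeping but reflects that the bounds on the $k_d$ via $\mathsf{t}$ of the $q_i$-primary parts, together with the bounds on $|S_d|$ via $\eta_{(d',d)}$, do not combine to anything as tight as $\eta(C^r_n)$. Your observation that $\eta$ is not additive over the primary decomposition even in rank one shows precisely why no simple induction on $\omega(n)$ can work here. In short, your plan correctly locates the difficulties but does not overcome them; the statement remains a conjecture.
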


It can readily be seen that Conjecture \ref{conjecture in the general case} holds whenever $G$ is an elementary $p$-group, since all non-zero elements of $G$ have same order in this case. 
In addition, our Corollary \ref{t(G) in the p-group case} already gives a "nearly optimal" answer when $G$ is of the form $C^r_n$, with $n$ a prime power.

\section*{Acknowledgments}
This work was started at IPAM in Los Angeles. 
Thus, I would like to warmly thank its staff, as well as the organizers of the program \textit{Combinatorics: Methods and Applications in Mathematics and Computer Science}, held in Fall 2009, for all their hospitality and for providing an excellent atmosphere for research. 
I would also like to thank W.~Schmid and the referees for helpful comments on an earlier version of this paper.

\end{document}